\newcommand\comm[1]{\opt{comm}
{{\color{red}$\blacktriangleright$\ \small\sf#1$\blacktriangleleft$}}}
\newcommand\francois[1]{{\color{magenta}#1}}
\author{Fran\c cois Digne and Jean Michel}
\address[F.~Digne]{Laboratoire Ami\'enois de Math\'ematique Fondamentale et
Appliqu\'ee, CNRS UMR 7352, Universit\'e de Picardie-Jules Verne,
80039 Amiens Cedex France.}
\email{digne@u-picardie.fr}
\urladdr{www.lamfa.u-picardie.fr/digne}
\address[J.~Michel]{Institut Math\'ematique de Jussieu -- Paris rive
gauche, CNRS UMR 7586, Universit\'e de Paris, B\^atiment Sophie Germain,
75013, Paris France.}
\email{jean.michel@imj-prg.fr}
\urladdr{webusers.imj-prg.fr/$\sim$jean.michel}
\title{Formulae for two-variable Green functions}
\newcommand\bG{{\mathbf G}}
\newcommand\bL{{\mathbf L}}
\newcommand\bP{{\mathbf P}}
\newcommand\bU{{\mathbf U}}
\newcommand\bX{{\mathbf X}}
\newcommand\CB{{\mathcal B}}
\newcommand\CI{{\mathcal I}}
\newcommand\CS{{\mathcal S}}
\newcommand\CX{{\mathcal X}}
\newcommand\CY{{\mathcal Y}}
\newcommand\CZ{{\mathcal Z}}
\newcommand\BF{{\mathbb F}}
\newcommand\tX{{\tilde\CX}}
\newcommand\tY{{\tilde\CY}}
\newcommand\scal[3]{\langle#1,#2\rangle_{#3}}
\newcommand\GF{{\bG^F}}
\newcommand\LF{{\bL^F}}
\newcommand\Lo{{\bL_\CI}}
\newcommand\Low{{\bL_{\CI,w}}}
\newcommand\LowF{{\bL_\CI^{wF}}}
\newcommand\ZLowF{Z^0(\Lo)^{wF}}
\newcommand\io{{\iota_\CI}}
\newcommand\ud{-}
\newcommand\HFZ{H^1(F,Z)}
\newcommand\inv{^{-1}}
\newcommand\Irr{\mathrm{Irr}}
\newcommand\IG{{\CI_\bG}}
\newcommand\rkss{\text{rkss}}
\newcommand\rk{\text{rk}}
\newcommand\chevie{{\tt Chevie}}
\DeclareMathOperator\Ind{\mathrm{Ind}}
\DeclareMathOperator\Res{\mathrm{Res}}
\DeclareMathOperator\Trace{\mathrm{Trace}}
\DeclareMathOperator\res{\mathrm{res}}
\DeclareMathOperator\codim{\mathrm{codim}}
\newtheorem{proposition}[equation]{Proposition}
\newtheorem{corollary}[equation]{Corollary}
\newtheorem{lemma}[equation]{Lemma}
\newcommand\lexp[2]{\kern\scriptspace\vphantom{#2}^{#1}\kern-\scriptspace#2}
\begin{document}
\begin{abstract}
Based  on results  of Digne-Michel-Lehrer  (2003) we  give two formulae for
two-variable  Green  functions  attached  to  Lusztig  induction  in a finite
reductive  group. We present applications  to explicit computation of these
Green  functions,  to  conjectures  of  Malle  and  Rotilio,  and to scalar
products between Lusztig inductions of Gelfand-Graev characters.
\end{abstract}
\maketitle
Let  $\bG$ be a connected reductive group  with Frobenius root $F$; that is,
some   power $F^\delta$  is   a  Frobenius   endomorphism  attached   to  an
$\BF_{q^\delta}$-structure on $\bG$, where $q^\delta$ is a power of a prime $p$.
Let $\bL$  be an $F$-stable Levi subgroup of a (non-necessarily $F$-stable)
parabolic subgroup $\bP$ of $\bG$. Let $\bU$ be the unipotent radical of
$\bP$ and let $\bX_\bU=\{g\bU\in\bG/\bU\mid g\inv\lexp Fg\in\bU\cdot\lexp
F\bU\}$ be the variety used to define the Lusztig induction and restriction
functors $R_\bL^\bG$ and $\lexp *R_\bL^\bG$.
For $u\in\GF, v\in\LF$ unipotent elements, the two-variable Green function is
defined as $$Q^\bG_\bL(u,v)=\Trace((u,v)\mid\sum_i(-1)^i H^i_c(\bX_\bU)).$$

In  this  paper,  using  the  results of \cite{DLM3}, we give two different
formulae  for two-variable Green functions, and some consequences of these,
including proving some conjectures of \cite{MR}.

The two-variables Green functions occur in the character formulae for Lusztig
induction and restriction. In particular, for unipotent elements these
formulae read
\begin{proposition}(See for example \cite[10.1.2]{DMbook})\label{unipotent}
\begin{itemize}
\item
If $u$ is a unipotent element of $\GF$, and $\psi$ a class function on $\LF$,
we have $$R_\bL^\bG(\psi)(u)=|\LF|\scal\psi{Q_\bL^\bG(u,\ud)}\LF.$$
\item
If $v$ is a unipotent element of $\LF$, and $\chi$ a class function on $\GF$,
we have $$\lexp *R_\bL^\bG(\chi)(v)=|\LF|\scal\chi{Q_\bL^\bG(\ud,v\inv)}\GF.$$
\end{itemize}
\end{proposition}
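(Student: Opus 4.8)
The plan is to specialise the cohomological definition of $R_\bL^\bG$ (resp. $\lexp *R_\bL^\bG$) as a trace on $H^*_c(\bX_\bU):=\sum_i(-1)^iH^i_c(\bX_\bU)$, regarded as a virtual $\GF\times\LF$-module. Writing out the character of the tensor product over the group algebra of $\LF$, one obtains, for any class function $\psi$ on $\LF$ and any $g\in\GF$, an expression of the shape $R_\bL^\bG(\psi)(g)=\frac1{|\LF|}\sum_{l\in\LF}\Trace((g,l)\mid H^*_c(\bX_\bU))\,\psi(l)$, where $(g,l)$ acts on $\bX_\bU$ through $x\bU\mapsto gxl\inv\bU$; the symmetric computation, with a sum over $g\in\GF$, yields $\lexp *R_\bL^\bG(\chi)(l)$. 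Comparing with the definition of $Q_\bL^\bG$, the diagonal unipotent summands are exactly two-variable Green functions, so the whole point is to isolate them.

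First I would establish the \emph{support statement}: for $u$ unipotent, $\Trace((u,l)\mid H^*_c(\bX_\bU))=0$ unless $l$ is unipotent, and symmetrically in the two factors. Writing the Jordan decomposition $l=sv$ with $s$ semisimple and $v$ unipotent, the semisimple part of $(u,l)$ is $(1,s)$, of order prime to $p$; by the Deligne--Lusztig fixed-point theorem the trace of $(u,l)$ on $H^*_c(\bX_\bU)$ equals the trace of the commuting unipotent part $(u,v)$ on $H^*_c(\bX_\bU^{(1,s)})$. But $(1,s)$ sends $x\bU$ to $xs\inv\bU$, so its fixed locus on $\bX_\bU$ is empty as soon as $s\notin\bU$, which holds for $s\ne1$ because $\bL\cap\bU=1$. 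Hence only the unipotent $l=v$ survive, and each surviving summand equals $Q_\bL^\bG(u,v)$ by definition.

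Granting this, the sum over $\LF$ collapses to a sum over the unipotent elements $v$, which, after unwinding the definition of the inner product, is precisely the right-hand side $|\LF|\scal\psi{Q_\bL^\bG(u,\ud)}\LF$; here one also uses the symmetry $\overline{Q_\bL^\bG(u,v)}=Q_\bL^\bG(u\inv,v\inv)$, coming from the finite order of $(u,v)$ on $H^*_c(\bX_\bU)$, to match the conjugation in the scalar product. This proves the first formula. The second is obtained by the identical argument with the roles of the two factors exchanged, Poincar\'e duality on $\bX_\bU$ (which replaces $H^*_c$ by its dual and thereby introduces the inverse) accounting for the $v\inv$ in the statement; its consistency with the first is moreover guaranteed by the adjunction $\scal{R_\bL^\bG(\psi)}\chi\GF=\scal\psi{\lexp *R_\bL^\bG(\chi)}\LF$. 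The main obstacle is the support statement of the second paragraph: once the fixed-point computation rules out every non-unipotent contribution, all that remains is the bookkeeping of the normalisation and of the duality symmetry. Alternatively, one may simply invoke the general two-variable character formula of \cite{DMbook} and read off the unipotent case by setting the semisimple part equal to $1$.
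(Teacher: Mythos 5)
The paper itself gives no proof of this proposition --- it is quoted from \cite[10.1.2]{DMbook} --- and your outline follows exactly the route of that reference: the trace formula for the virtual $(\GF,\LF)$-bimodule $H^*_c(\bX_\bU)$, followed by the Deligne--Lusztig fixed-point theorem to kill the contribution of every $l\in\LF$ with nontrivial semisimple part $s$, because $(1,s)$ acts on $\bX_\bU$ without fixed points (as $s\in\bL$, $s\ne 1$, cannot lie in $\bU$). These two steps are correct and correctly justified, and they are the heart of the matter.

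The problem is that the ``bookkeeping'' you defer to the end is precisely where your write-up fails, on two counts. First, normalisation: your collapsed sum is $|\LF|\inv\sum_{v}\Trace\bigl((u,v)\mid H^*_c(\bX_\bU)\bigr)\psi(v)$ (sum over unipotent $v$), while the right-hand side of the statement is $\sum_{v}\psi(v)\overline{Q_\bL^\bG(u,v)}$; under the definition of $Q_\bL^\bG$ displayed in the paper (the raw trace) these differ by a factor $|\LF|$, so they are \emph{not} ``precisely'' equal. The statement is true only for the normalisation $Q^\bG_\bL(u,v)=|\LF|\inv\Trace\bigl((u,v)\mid\sum_i(-1)^iH^i_c(\bX_\bU)\bigr)$, which is the convention of \cite{DMbook} and the one actually used in the rest of the paper (Proposition \ref{QLG=} gives the non-integral value $|v^\LF|\inv$, and the corollary's proof states that $|\LF|\,Q_\bL^\bG(u,v)$ --- not $Q_\bL^\bG(u,v)$ --- is a Lefschetz number); an honest bookkeeping has to detect and resolve this discrepancy rather than assert the match. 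Second, conjugation: to identify your sum with the Hermitian product you need $\overline{Q_\bL^\bG(u,v)}=Q_\bL^\bG(u,v)$, and the symmetry $\overline{Q_\bL^\bG(u,v)}=Q_\bL^\bG(u\inv,v\inv)$ you invoke is strictly weaker: it closes the gap only when $u$ and $v$ are conjugate to their inverses in the finite groups, which can fail (for $q\equiv 3\pmod 4$ a regular unipotent element of $SL_2(\BF_q)$ is not conjugate to its inverse). The missing ingredient is Deligne--Lusztig's rationality theorem --- the Lefschetz number of any finite-order automorphism is a rational integer, available here as \cite[8.1.3]{DMbook} --- which makes $Q_\bL^\bG$ integer-valued; only then can the conjugation bars, and the ambiguity between the actions $x\bU\mapsto gxl\bU$ and $x\bU\mapsto gxl\inv\bU$, be ignored. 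Two smaller points: in (ii) the $v\inv$ arises from passing to the dual (contragredient) bimodule in the adjunction defining $\lexp *R_\bL^\bG$, not from Poincar\'e duality on $\bX_\bU$; and your proposed shortcut of quoting the general character formula is circular relative to \cite{DMbook}, where that formula is deduced from the present unipotent case.
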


\section*{Two formulae for two-variable Green functions}
For an element $u$ in a group $G$ we denote by $u^G$ the $G$-conjugacy class
of $u$.
\begin{proposition}\label{QLG=} Assume either the centre $Z\bG$ of $\bG$ is
connected and $q>2$, or $q$ is large enough (depending just on the Dynkin
diagram of $\bG$). Then for $u$ regular, $Q_\bL^\bG(u,\ud)$ vanishes outside
a unique regular unipotent class of $\LF$. For $v$ in that class,
we have $Q_\bL^\bG(u,v)=|v^\LF|\inv$.
\end{proposition}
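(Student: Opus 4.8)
The plan is to turn the statement into an evaluation property of Lusztig induction at a regular unipotent element, and then to establish that property on the whole space of unipotently supported class functions of $\LF$.

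First I would invoke the character formula of Proposition~\ref{unipotent}(i). Extended by zero off the unipotent set, $Q_\bL^\bG(u,\ud)$ is a unipotently supported class function on $\LF$, and the formula reads $R_\bL^\bG(\psi)(u)=\sum_{v\text{ unipotent}}\psi(v)\overline{Q_\bL^\bG(u,v)}$ for every class function $\psi$ on $\LF$. Hence the linear form $\psi\mapsto R_\bL^\bG(\psi)(u)$ on unipotently supported class functions carries exactly the same information as $Q_\bL^\bG(u,\ud)$, and testing against the indicator functions of unipotent classes shows that the proposition is equivalent to the assertion that there is a regular unipotent class $v_0^\LF$ of $\LF$ with
$$R_\bL^\bG(\psi)(u)=\psi(v_0)\quad\text{for every unipotently supported class function }\psi\text{ on }\LF.$$
Indeed this identity forces $\overline{Q_\bL^\bG(u,v)}=|v_0^\LF|\inv\mathbf 1_{v\in v_0^\LF}$, which gives simultaneously the claimed support, the value $|v^\LF|\inv$, and the reality of $Q_\bL^\bG(u,\ud)$; the case $\bL=\bG$, where $R_\bG^\bG=\Id$ and $v_0=u$, is a consistency check.

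To prove this identity I would use the basis of the space of unipotently supported class functions of $\LF$ given by the generalized Gelfand--Graev characters $\Gamma_{\mathcal O}^\bL$ attached to the unipotent classes $\mathcal O$ of $\bL$; the hypotheses on $q$ serve precisely to guarantee, through \cite{DLM3}, that these characters are defined and linearly independent, spanning that space, and that the regular unipotent classes are generic in number. From \cite{DLM3} I would extract two facts: (a) Lusztig induction respects the partial order by unipotent support, the support of $R_\bL^\bG(\Gamma_{\mathcal O}^\bL)$ lying in the closure of the class $\Ind_\bL^\bG\mathcal O$ obtained by Lusztig--Spaltenstein induction of unipotent classes; and (b) the regular class of $\bL$ induces to the regular class of $\bG$. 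As the regular class is open in the unipotent variety, evaluation at the regular element $u$ kills $R_\bL^\bG(\Gamma_{\mathcal O}^\bL)$ unless $\mathcal O$ is the regular class of $\bL$; this already yields the vanishing of $Q_\bL^\bG(u,\ud)$ outside the regular unipotent classes.

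There remain the determination of the scalar and the selection of the single surviving class $v_0^\LF$ among the possibly several rational regular classes of $\LF$. For the scalar it is enough to test the identity on one ordinary Gelfand--Graev character and to use the theorem that Lusztig induction sends it, in the cleanest case, to $R_\bL^\bG(\Gamma_\bL)=\epsilon_\bG\epsilon_\bL\Gamma_\bG$ with the usual signs attached to the $\BF_q$-ranks; combined with the reduction above, the value $|v^\LF|\inv$ is then equivalent to the equality $\epsilon_\bG\epsilon_\bL\Gamma_\bG(u)=\Gamma_\bL(v_0)$ of Gelfand--Graev values at corresponding regular unipotent elements, which follows from their explicit computation, both being governed by the one generic linear character restricted from a maximal unipotent subgroup of $\GF$ to one of $\LF$. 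The uniqueness of $v_0$ is controlled by the action of $H^1(F,Z\bG)$ on Gelfand--Graev characters. I expect the main obstacle to be exactly this last step: showing that a single rational regular class is selected and that no spurious sign or power of $q$ survives in the scalar, which is where ``$Z\bG$ connected and $q>2$'' or ``$q$ large'' are genuinely needed.
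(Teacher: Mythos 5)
Your opening reduction is correct, but it does not advance the proof: by adjunction of $R_\bL^\bG$ and $\lexp *R_\bL^\bG$, the statement you reduce to, namely that $R_\bL^\bG(\psi)(u)=\psi(v_0)$ for every unipotently supported class function $\psi$ on $\LF$, is word for word equivalent to $\lexp *R_\bL^\bG(\gamma_u^\bG)=\gamma_{v_0}^\bL$, where $\gamma$ denotes normalized characteristic functions of classes. This last identity is precisely \cite[Theorem 15.2]{Bonnafe}, which is the single deep input of the paper's proof; the rest of the paper's argument consists only of weakening Bonnaf\'e's hypotheses to those of the proposition (the Mackey formula holds for $q>2$ by \cite{BM}, and the results of \cite{GF} hold without condition on $q$ when $Z\bG$ is connected by Shoji, the conditions on $p$ being removable by cleanness). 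So everything hinges on your sketch of a proof of this identity, and that sketch has genuine gaps.

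The most serious one is the scalar step: the identity $R_\bL^\bG(\Gamma_\bL)=\epsilon_\bG\epsilon_\bL\Gamma_\bG$ is false. The true theorem (Deligne--Lusztig, \cite{DLM1}, Bonnaf\'e) concerns Lusztig \emph{restriction}, $\lexp *R_\bL^\bG\Gamma^\bG_z=\Gamma^\bL_{h_\bL(z)}$, not induction. Already for $\bG=GL_2$ and $\bL=T$ a split maximal torus one has $R_T^\bG(\Gamma_T)=R_T^\bG(\mathrm{reg}_{T^F})=\sum_\theta R_T^\bG(\theta)$, which contains the trivial character, while $\Gamma_\bG$ contains no linear character; no sign can repair this. (Indeed, were $R_\bL^\bG\Gamma^\bL$ equal to $\pm\Gamma^\bG$, the computation of $\scal{R_\bL^\bG\Gamma^\bL_\iota}{R_\bL^\bG\Gamma^\bL_\iota}\GF$ in Proposition \ref{RLG Gamma} would be trivial, which it is not.) Second, your support claim (a), that $R_\bL^\bG(\Gamma^\bL_{\mathcal O})$ vanishes outside $\overline{\Ind_\bL^\bG\mathcal O}$, is not in \cite{DLM3} and is not an off-the-shelf fact for Lusztig (as opposed to Harish-Chandra) induction; in this paper it only appears downstream, as part (i) of the Corollary following Proposition \ref{formula2}, whose proof passes through Propositions \ref{somme} and \ref{formula2}, so invoking it here reverses the logical order. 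Two further points: generalized Gelfand--Graev characters require $p$ good, whereas the proposition carries no hypothesis on $p$; and the selection of the single rational regular class $v_0$ among the several regular classes of $\LF$ when $Z(\bL)$ is disconnected --- which you yourself identify as the main obstacle --- is exactly the content of Bonnaf\'e's theorem and is not addressed beyond a gesture at the $H^1(F,Z\bG)$-action. In short, you have correctly reformulated the proposition as the theorem the paper quotes, but your proposed proof of that theorem does not stand.
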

\begin{proof}[Proof (Rotilio)]
Let $\gamma_u^\bG$ be the normalized characteristic function of the
$\GF$-conjugacy class of $u$; that is, the function equal to $0$ outside the
class of $u$ and to $|C_\bG(u)^F|$ on that class.
For $v'\in\LF$ unipotent, Proposition \ref{unipotent} gives
 $$\lexp *R^\bG_\bL(\gamma_u^\bG)(v')
 =|\LF|\scal{\gamma_u^\bG}{Q_\bL^\bG(\ud,v^{\prime-1})}\GF
 =|\LF|Q_\bL^\bG(u,v^{\prime-1}).$$
Now, by \cite[Theorem 15.2]{Bonnafe}, there exists $v\in\LF$ such that the
left-hand side is equal to $\gamma_v^\bL(v')$. In \cite[Theorem 15.2]{Bonnafe}
$q$ is assumed large enough so that the Mackey formula holds and that $p$
and $q$ are such that the results of \cite{GF} hold (that is $p$ almost good
and $q$ larger than some number depending on the Dynkin diagram of $\bG$).
The Mackey formula is known to hold if $q>2$ by
\cite{BM} and the results of \cite{GF} hold
without condition on $q$ if $Z\bG$ is connected by \cite{shoji} or \cite[Theorem 4.2]{shoji96}
(the assumption that $p$ is almost good can be removed, see
\cite[section 89]{comments} and \cite{clean}).
The proposition follows.
\end{proof}
\comm{
\begin{proof}
\francois{definir $\hat \gamma$}
Formula (B) in the proof of \cite[Proposition 15.2]{Bonnafe} can be written
$$R_\LowF^\LF(\hat\gamma^\LowF_{v_w,\zeta})(u_{\bL,z_\bL})=
\zeta(\varphi_{\Lo,v}^{\bG}(w))\scal{\hat\gamma^\LowF_{v_w,\zeta}}
{\hat\gamma^\LowF_{v_w,\zeta}}{\LowF},$$
where $z_\bL\in H^1(F,Z(\bL))$ depends on the $\GF$-class of $u$ and
$u_{\bL,z_\bL}$ is a regular unipotent element of $\LF$ whose class is
parametrised by $z_\bL$ with respect to the class of $\res^\bG_\bL(u)$
(notation of \cite{Bonnafe}).

Comparing with the analogous formula with $\bG$ instead of $\bL$, we get
\begin{equation}\label{*}
R_\LowF^\LF(\hat\gamma^{\LowF}_{v_w,\zeta})(u_{\bL,z_\bL})=
R_\LowF^\GF(\hat\gamma^\LowF_{v_w,\zeta})(u).
\end{equation}

By formula \cite[15.8]{Bonnafe} we have
$\hat\gamma^{\LowF}_{v_w,\zeta}=\frac{|(Z(\Low))^{0F}|}
{\zeta(\varphi_{\bL_\CI,v}^{\bG}(w))}
\CX_\io$.
By \cite[Proposition 3.2]{DLM3},
we have $Q^{\bG,\IG}_{wF}=R_\LowF^\GF(\tilde\CX_\io)=
q^{c_{\iota_\bG}}R_\LowF^\GF(\CX_\io)$, where
$c_{\iota_\bG}$ is as in \cite[above Remark 2.1]{DLM3} and
$\iota_\bG$ is the  only local  system with regular support
in $\IG$ (see \cite[1.10]{DLM2}). Using the same formula in $\bL$, equality
(\ref{*})  gives $Q^{\bG,\IG}_{wF}(u)=Q^{\bL,\CI}_{wF}(u_{\bL,z_\bL})$,
using that $c_{\iota_\bG}=c_\iota$
where $\iota$ is the analogue of $\iota_\bG$ for $\bL$.

Thus the term relative to the block $\CI$ in the formula
 of Proposition \ref{somme} can be written
 $|\LF|\inv\sum_{w\in W_\bL{\Lo}}\frac{|\ZLowF|}{|W_\bL(\Lo)|}
 Q^{\bL,\CI}_{wF}(u_{\bL,z_\bL})Q_{wF}^{\bL,\CI}(v)$
which, according to \cite[5.4]{DLM3} vanishes unless $v$ is regular, and
for $v$ regular is equal to
$$|\LF|\inv|H^1(F,Z(\bL))|\inv\sum_{z\in H^1(F,Z(\bL))}|C^0_\bL(u_{\bL,z})^F|
\overline{\CY_\iota(u_{\bL,z})}\CY_\iota(u_{\bL,z_\bL})\CY_\iota(u_{\bL,z})
\overline{\CY_\iota(v)}.$$

The cardinality $|C^0_\bL(u_{\bL,z})^F|$ is independent of $z$ (hence equal
to $|C^0_\bL(u_{\bL,z_\bL})^F|$) since it is $|(Z^0(\bL))^F|$ times $|\bU_z^F|$
where $\bU_z$ is a connected unipotent group whose dimension is independent
of $z$ (see \cite[Lemma 12.2.3]{DMbook}), whence the assertion by,
for instance \cite[4.1.12]{DMbook}. By  \cite[(4.2)]{DLM3}
the above sum becomes
$$|\LF|\inv|C^0_\bL(u_{\bL,z_\bL})^F|
\CY_\iota(u_{\bL,z_\bL}) \overline{\CY_\iota(v)}.$$
By \cite[(4.5)]{DLM3}, the sum over all blocks of this formula
vanishes unless $v$ is $\LF$-conjugate to  $u_{\bL,z_\bL}$
and is equal to $|\LF|\inv |C_\bL(u_{\bL,z_\bL})^F|$ in this last case,
whence the proposition.
\end{proof}}

As in \cite{DLM3}, we consider irreducible $\bG$-equivariant local systems
on  unipotent  classes.  These  local systems are partitioned into
``blocks''  parametrised by  cuspidal pairs formed by a Levi subgroup
and a cuspidal local system supported on a unipotent class of that Levi
subgroup.
Let us call $Q^{\bL,\CI}_{wF}$ the function defined in \cite[3.1(iii)]{DLM3}
relative to an $F$-stable block $\CI$ of unipotently supported local systems
on $\bL$ and to $wF\in W_\bL(\Lo)F$
where $(\Lo,\io)$ is the cuspidal datum of $\CI$ (for a Levi $\bL$ of a
reductive group $\bG$, we set $W_\bG(\bL)=N_\bG(\bL)/\bL$).

\begin{proposition} \label{somme}
Assume either $Z\bG$ is connected or $q$ is large enough 
(depending just on the Dynkin diagram of $\bG$). 
For $u$ a unipotent element of $\GF$ and $v$ a unipotent
element of $\LF$, we have
$$Q^\bG_\bL(u,v)=|\LF|\inv \sum_\CI\sum_{w\in W_\bL(\Lo)}
\frac{|\ZLowF|}{|W_\bL(\Lo)|}\overline {Q^{\bG,\IG}_{wF}(u)}
Q_{wF}^{\bL,\CI}(v),$$
where $\CI$ runs over the $F$-stable blocks of $\bL$ and where
$\CI_\bG$ is the block of $\bG$ with same cuspidal data as $\CI$.
\end{proposition}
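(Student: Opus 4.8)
The plan is to pin down $Q^\bG_\bL(u,\ud)$ as a unipotently supported class function on $\LF$ through the adjunction of Proposition~\ref{unipotent}, and then to show that the claimed right-hand side satisfies the very same characterisation. Write $R(u,v)$ for the proposed expression. By the first formula of Proposition~\ref{unipotent}, $Q^\bG_\bL(u,\ud)$ is the unique unipotently supported class function $\phi$ on $\LF$ such that $R_\bL^\bG(\psi)(u)=|\LF|\scal\psi\phi\LF$ for every class function $\psi$ on $\LF$. Since the functions $Q^{\bL,\CI}_{wF}$, as $\CI$ ranges over the $F$-stable blocks of $\bL$ and $w$ over $W_\bL(\Lo)$, span the space of unipotently supported class functions on $\LF$ (a consequence of the generalised Springer correspondence, see \cite{DLM3}), it is enough to check, for $\psi=Q^{\bL,\CI}_{wF}$ with $\CI$ and $w$ fixed, the identity $R_\bL^\bG(Q^{\bL,\CI}_{wF})(u)=|\LF|\scal{Q^{\bL,\CI}_{wF}}{R(u,\ud)}\LF$; uniqueness then yields $R(u,v)=Q^\bG_\bL(u,v)$.

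I would compute the left-hand side by transitivity of Lusztig induction. Applying \cite[Proposition~3.2]{DLM3} inside $\bL$ gives $Q^{\bL,\CI}_{wF}=q^{c_\iota}R_\LowF^\LF(\CX_\io)$, where $\CX_\io$ is the characteristic function of the cuspidal local system on $\Low$ and $c_\iota$ is the normalisation constant of \cite{DLM3}. As $\Low$ is a Levi of $\bL$ and $\bL$ is a Levi of $\bG$, transitivity gives $R_\bL^\bG R_\LowF^\LF=R_\LowF^\GF$, whence $R_\bL^\bG(Q^{\bL,\CI}_{wF})=q^{c_\iota}R_\LowF^\GF(\CX_\io)$. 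Since the cuspidal datum $(\Low,\io)$ of $\CI$ is also the cuspidal datum of $\IG$, we have $c_{\iota_\bG}=c_\iota$, and \cite[Proposition~3.2]{DLM3} applied in $\bG$ then identifies the previous expression with $Q^{\bG,\IG}_{wF}$. Hence the left-hand side equals $Q^{\bG,\IG}_{wF}(u)$.

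For the right-hand side I would expand $R(u,\ud)$ according to its definition and pair it against $Q^{\bL,\CI}_{wF}$, which reduces everything to the scalar products $\scal{Q^{\bL,\CI}_{wF}}{Q^{\bL,\CI'}_{w'F}}\LF$, the summand $(\CI',w')$ being weighted by $Q^{\bG,(\CI')_\bG}_{w'F}(u)$ and by the factor $|\ZLowF|/|W_\bL(\Lo)|$ of the statement. At this point I would invoke the orthogonality relations for these generalised Green functions from \cite{DLM3} (those underlying \cite[(4.2),(4.5)]{DLM3}): the scalar product vanishes unless $\CI=\CI'$ and $w,w'$ are $W_\bL(\Lo)$-conjugate, its non-zero value being $|W_\bL(\Lo)|/|\ZLowF|$ up to the conjugacy weighting. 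Feeding this in collapses the double sum, the factor $|\ZLowF|/|W_\bL(\Lo)|$ cancelling the norm and the conjugate on $Q^{\bG,\IG}_{wF}(u)$ being undone by the conjugation built into the scalar product, so that $|\LF|\scal{Q^{\bL,\CI}_{wF}}{R(u,\ud)}\LF=Q^{\bG,\IG}_{wF}(u)$, matching the left-hand side.

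The main obstacle is to secure the two inputs drawn from \cite{DLM3}, for it is through them that the hypotheses enter. The identification $R_\bL^\bG(Q^{\bL,\CI}_{wF})=Q^{\bG,\IG}_{wF}$ rests on \cite[Proposition~3.2]{DLM3} together with transitivity of Lusztig induction along $\Low\subset\bL\subset\bG$, and the collapse of the right-hand side rests on the orthogonality relations; both the availability of these generalised Green functions and the relations themselves require the underlying theory to be unconditional, which holds when $Z\bG$ is connected and otherwise demands $q$ large, precisely the dichotomy of Proposition~\ref{QLG=}. Granting these, what remains is the bookkeeping: tracking the powers $q^{c_\iota}$, confirming $c_{\iota_\bG}=c_\iota$, and checking that the conjugacy weighting in the orthogonality relations is cancelled exactly by $|\ZLowF|/|W_\bL(\Lo)|$ so that no stray constant survives.
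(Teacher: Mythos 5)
Your proof is correct and takes essentially the same route as the paper: both rest on the adjunction of Proposition \ref{unipotent} applied to $\psi=Q^{\bL,\CI}_{wF}$, on the identification $R_\bL^\bG Q^{\bL,\CI}_{wF}=Q^{\bG,\IG}_{wF}$ via \cite[Proposition 3.2]{DLM3} plus transitivity of Lusztig induction, and on the orthogonality and basis properties of the functions $Q^{\bL,\CI}_{wF}$. The only difference is packaging — the paper expands $Q^\bG_\bL(u,\ud)$ in the orthogonal basis and solves for the coefficients, whereas you verify that the proposed right-hand side has the correct scalar products against the spanning set and invoke non-degeneracy; this is the same computation read in the opposite direction.
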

The part of the above sum for $\CI$  the principal block is the same formula
as \cite[Corollaire 4.4]{DM}.
\begin{proof}
Proposition  \ref{unipotent} applied with $\psi=Q^{\bL,\CI}_{wF}$
gives, if we write $R_\LF^\GF$ instead of $R_\bL^\bG$
to keep track of the Frobenius,
$$\scal{Q^{\bL,\CI}_{wF}}{Q^\bG_\bL(u,\ud)}\LF=|\LF|\inv
R_\LF^\GF(Q^{\bL,\CI}_{wF})(u)$$

Now we have by \cite[Proposition 3.2]{DLM3}
$Q^{\bL,\CI}_{wF}=R_\LowF^\LF\tX_{\io,wF}$ where
 $\tX_{\io,wF}$ is $q^{c_\io}$ times the characteristic function of $(\io,
wF)$, a class function on $\LowF$. Here, as in
\cite[above Remark 2.1]{DLM3}, for an irreducible $\bG$-equivariant
local system $\iota$, we denote by $C_\iota$ the unipotent $\bG$-conjugacy
class which is the support of $\iota$, and if
$(\Lo,\io)$ is the cuspidal datum of $\iota$ we set
$c_\iota=\frac 12(\codim C_\iota-\dim Z(\Lo))$.
In \cite[Proposition 3.2]{DLM3} the assumptions on $p$ and $q$ come from
\cite{GF} but by the same considerations than at the end of the proof of
Proposition \ref{QLG=} it is sufficient to assume $Z\bG$ connected or 
$q$ large enough.

By the transitivity of Lusztig induction we get
$$\scal{Q^{\bL,\CI}_{wF}}{Q^\bG_\bL(u,\ud)}\LF=|\LF|\inv R_\LowF^\GF(
\tX_{\io,wF})(u)=|\LF|\inv Q_{wF}^{\bG,\IG}(u),$$
where $\CI_\bG$ is the block of $\bG$ with same cuspidal data as $\CI$.
Using the orthogonality of the Green functions $Q^{\bL,\CI}_{wF}$,
see \cite[Corollary 3.5]{DLM3} (where the assumption is that $p$ is almost
good which comes from \cite{CS}, so can be removed now by \cite{clean})
and the fact they form a basis of
unipotently supported class functions on $\LF$, indexed by the
$W_\bL(\Lo)$-conjugacy classes of $W_\bL(\Lo)F$, we get the proposition.
\end{proof}
Proposition \ref{somme} gives a convenient formula to compute automatically
two-variable Green functions. Table 1 gives an example, computed with the
package \chevie\ (see \cite{chevie}).

We denote by $\CY_\iota$ the characteristic function of the $F$-stable local
system $\iota$, and by $A(u)$ the group of
components of the centralizer of a unipotent element $u$.
\begin{proposition}\label{formula2} 
Assume either $Z\bG$ is connected or $q$ is large enough 
(depending just on the Dynkin diagram of $\bG$). 
Let  $R_{\iota,\gamma}$  be  the  polynomials  which  appear in \cite[Lemma
6.9]{DLM3}. Then
$$Q_\bL^\bG(u,v)=|v^\LF|\inv|A(v)|\inv\sum_\CI\sum_{\iota\in\IG^F,
\gamma\in\CI^F}\overline{\CY_\iota(u)}\CY_\gamma(v)R_{\iota,\gamma}q^{c_\iota-c_\gamma},$$
where $c_\iota=\frac 12(\codim C_\iota-\dim Z(\Lo))$.
\end{proposition}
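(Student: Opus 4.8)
The plan is to derive the formula directly from Proposition \ref{somme} by re-expressing the block Green functions $Q^{\bG,\IG}_{wF}$ and $Q^{\bL,\CI}_{wF}$ in the basis of characteristic functions of local systems. Recall from \cite[Proposition 3.2]{DLM3} that $Q^{\bG,\IG}_{wF}=R_\LowF^\GF(\tX_{\io,wF})$; decomposing this Lusztig induction of the cuspidal characteristic function through the generalized Springer correspondence writes $Q^{\bG,\IG}_{wF}$ as a combination of the $\CY_\iota$, $\iota\in\IG^F$, the coefficients being the transition data recorded in \cite[Lemma 6.9]{DLM3} (the $q^{c_\iota}$-shift relating the normalized intersection-cohomology functions to the bare $\CY_\iota$, times a value of an irreducible character of $W_\bG(\Lo)$ at $wF$ and a Frobenius-eigenvalue scalar). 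Applying the same decomposition inside $\bL$ expresses $Q^{\bL,\CI}_{wF}$ through the $\CY_\gamma$, $\gamma\in\CI^F$, with characters of the smaller group $W_\bL(\Lo)$ and the shift $q^{c_\gamma}$.

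First I would substitute both expansions into Proposition \ref{somme}. Since the $u$-dependence sits entirely in $\overline{\CY_\iota(u)}$ and the $v$-dependence in $\CY_\gamma(v)$, these factor out of the inner sum, leaving for each pair $(\iota,\gamma)$ the weighted sum over $w\in W_\bL(\Lo)$ of the product of the two transition coefficients, with weight $|\ZLowF|/|W_\bL(\Lo)|$. Here the character of $\iota$, a priori a character of $W_\bG(\Lo)$, is restricted to the subgroup $W_\bL(\Lo)$ because $w$ ranges only over that subgroup. By \cite[Lemma 6.9]{DLM3} this $w$-sum is exactly $R_{\iota,\gamma}\,q^{c_\iota-c_\gamma}$; the substantive content borrowed from that lemma is that, although the individual summands involve roots of unity coming from the Frobenius eigenvalues, their combination is a polynomial in $q$.

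What then remains is to reconcile the overall constants. The substitution leaves the prefactor $|\LF|\inv$ of Proposition \ref{somme}, whereas the target formula carries $|v^\LF|\inv|A(v)|\inv=|C_\bL(v)^F|/(|\LF|\,|A(v)|)$, so a factor $|C_\bL(v)^F|/|A(v)|$ must be produced. This factor is supplied by the normalization of the $\CY_\gamma$: rewriting the values $Q^{\bL,\CI}_{wF}(v)$ through the characteristic-function values $\CY_\gamma(v)$ brings in precisely the centralizer order and the component group $A(v)$, via the orthogonality relations for the $\CY_\gamma$ on the class of $v$. Matching these constants exactly, and keeping the two degree shifts $c_\iota$ and $c_\gamma$ (computed in $\bG$ and in $\bL$ respectively) correctly attached to their sides, is the main bookkeeping obstacle and the only genuinely delicate point; once the normalizations of \cite{DLM3} are taken verbatim it is routine. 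The hypotheses on $Z\bG$ and on $q$ are inherited unchanged from Proposition \ref{somme}, under which the orthogonality and cleanness results invoked hold.
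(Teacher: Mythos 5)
Your strategy is the same as the paper's in outline: expand the block Green functions in the local-system basis and substitute into Proposition \ref{somme}. (A small correction first: that expansion, $Q^{\bG,\IG}_{wF}(u)=\sum_{\iota\in\IG^F}\tilde Q_\iota(wF)\tY_\iota(u)$ with $\tY_\iota=q^{c_\iota}\CY_\iota$, is \cite[(4.4)]{DLM3} with coefficients the functions $\tilde Q_\iota$ of \cite[(4.1)]{DLM3}; it is not part of Lemma 6.9.) The genuine gap is in your central step. The $w$-sum attached to a pair of local systems is the weighted inner product
$$\scal{\CZ_\Lo\tilde Q_\kappa}{\Res^{W_\bG(\Lo)F}_{W_\bL(\Lo)F}\tilde Q_\iota}{W_\bL(\Lo)F},$$
where $\CZ_\Lo$ is the weight $wF\mapsto|\ZLowF|$. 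Lemma 6.9 of \cite{DLM3} only rewrites $\Res\tilde Q_\iota$ as $\sum_\gamma R_{\iota,\gamma}\tilde Q_\gamma$; it says nothing about the value of this inner product. That value is given by \cite[Corollary 5.2]{DLM3}, and it is \emph{not} $\delta_{\gamma,\kappa}$ times a constant: the Gram matrix of the $\tilde Q$'s for the $\CZ_\Lo$-weighted product is only block-diagonal with respect to supports, the entry for $C_\gamma=C_\kappa$ being
$$|A(v)|\inv\sum_{a\in A(v)}|C^0_\bL(v_a)^F|\,q^{-2c_\gamma}\CY_\gamma(v_a)\overline{\CY_\kappa(v_a)},$$
with cross terms between distinct local systems sharing a support and contributions from all the rational classes $v_a$, $a\in A(v)$, inside the geometric class. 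So your assertion that ``by Lemma 6.9 this $w$-sum is exactly $R_{\iota,\gamma}q^{c_\iota-c_\gamma}$'' is false, and the factor $|C_\bL(v)^F|/|A(v)|$ cannot be extracted pair by pair as you propose.

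What repairs this — and it is the substantive part of the paper's proof, not routine bookkeeping — is that after multiplying by $\tY_\kappa(v)$ and summing over $\kappa$ one invokes the orthogonality \cite[(4.5)]{DLM3}, namely $\sum_\kappa\overline{\CY_\kappa(v_a)}\tY_\kappa(v)=q^{c_\kappa}|A(v)^F|$ if $v_a=v$ and $0$ otherwise, and this holds only when $\kappa$ runs over \emph{all} local systems supported on the class of $v$, across all blocks. Consequently the per-block expressions do not simplify individually: the sum over $a\in A(v)$ collapses to the single term $v_a=v$, and the centralizer orders recombine into $|v^\LF|\inv|A(v)|\inv$ (via $|C^0_\bL(v)^F|\,|A(v)^F|=|C_\bL(v)^F|$), only after the sum over blocks is taken. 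Your proposal, which closes the computation within each block and for each pair $(\iota,\gamma)$ separately, skips exactly this cross-block cancellation and therefore does not constitute a proof as written.
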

\begin{proof}
For a block $\CI$ of $\bL$ and $\iota\in\CI^F$,
let $\tilde Q_\iota$ be the function of
\cite[(4.1)]{DLM3}. Then by \cite[(4.4)]{DLM3} applied respectively in $\bG$
 and $\bL$ we have
 $$Q^{\bG,\IG}_{wF}(u)=\sum_{\iota\in\IG^F}\tilde Q_\iota(wF)\tY_\iota(u)
\quad\text{ and }\quad
Q^{\bL,\CI}_{wF}(v)=\sum_{\kappa\in\CI^F}\tilde Q_\kappa(wF)\tY_\kappa(v)$$
 where $\tilde\CY_\iota=q^{c_\iota}\CY_\iota$.
Thus, using the notation $\CZ_\Lo$ as in \cite[3.3]{DLM3} to denote the function
$wF\mapsto|\ZLowF|$ on $W_\bG(\Lo)F$,
the term relative to a block $\CI$ in the formula
of Proposition \ref{somme} can be written
$$|\LF|\inv\scal{\CZ_\Lo\sum_{\kappa\in\CI^F}\tilde Q_\kappa\tY_\kappa(v)}
{\sum_{\iota\in\IG^F}\tY_\iota(u)\Res^{W_\bG(\Lo)F}_{W_\bL(\Lo)F}
\tilde Q_\iota}{W_\bL(\Lo)F}.$$
Applying now \cite[Lemma 6.9]{DLM3} this is equal to
$$|\LF|\inv\scal{\CZ_\Lo\sum_{\kappa\in\CI^F}\tilde Q_\kappa\tY_\kappa(v)}
{\sum_{\iota\in\IG^F,\gamma\in\CI^F}\tY_\iota(u)
R_{\iota,\gamma} \tilde Q_\gamma} {W_\bL(\Lo)F},$$
We use now \cite[Corollary 5.2]{DLM3} which says that,
$\scal{\tilde Q_\gamma}{\CZ_\Lo\tilde Q_\kappa}{W_\bL(\Lo)F}=0$
unless $C_\gamma=C_\kappa$ and in this last case is equal to
$$|A(v)|\inv\sum_{a\in A(v)}|C^0_\bL(v_a)^F|
q^{-2c_\gamma}\CY_\gamma(v_a)\overline{\CY_\kappa(v_a)}$$
Thus the previous sum becomes
$$|\LF|\inv\sum_{\iota\in\IG^F,\gamma\in\CI^F}\overline{\tY_\iota(u)}
 R_{\iota,\gamma} |A(v)|\inv\sum_{a\in A(v)}|C^0_\bL(v_a)^F|
 q^{-2c_\gamma}\CY_\gamma(v_a)\sum_{\kappa\in\CI^F}\overline{\CY_\kappa(v_a)}
\tY_\kappa(v).$$
But by \cite[(4.5)]{DLM3} we have
$\sum_\kappa\overline{\CY_\kappa(v_a)}\tY_\kappa(v)=\begin{cases}
q^{c_\kappa}|A(v)^F|&\text{ if $v_a=v$}\\0&\text{otherwise}\end{cases}$,
where  $\kappa$ runs  over all  local systems.  Thus, summing  over all the
blocks, we get the formula in the statement.
\end{proof}
\begin{corollary}
Assume either $Z\bG$ is connected or $q$ is large enough 
(depending just on the Dynkin diagram of $\bG$). 
Then for any unipotent elements $u\in\GF$ and $v\in\LF$ we have:
\begin{enumerate}
\item $Q_\bL^\bG(u,v)$ vanishes unless
$v^\bG\subseteq \overline{u^\bG}\subseteq{\Ind_\bL^\bG(v^\bL)}$,
where $\Ind_\bL^\bG(v^\bL)$ is the induced class in the sense of \cite{LS}.
\item $|v^\LF| |A(v)| Q_\bL^\bG(u,v)$ is an integer and is a polynomial in
$q$ with integral coefficients.
\end{enumerate}
\end{corollary}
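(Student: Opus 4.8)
The plan is to treat the two assertions by quite different means: part~(i) by a direct geometric (Lefschetz) analysis of the variety $\bX_\bU$, and part~(ii) by extracting integrality and polynomiality from the explicit formula of Proposition~\ref{formula2}.

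For part~(i) I would argue as follows. The element $(u,v)\in\GF\times\LF$ is unipotent, hence of order a power of $p$, in particular of finite order prime to $\ell$. By the Lefschetz fixed-point principle, $\sum_i(-1)^i\Trace((u,v)\mid H^i_c(\bX_\bU))=\sum_i(-1)^i\Trace((u,v)\mid H^i_c(\bX_\bU^{(u,v)}))$, so if $Q_\bL^\bG(u,v)\neq 0$ then the fixed-point set $\bX_\bU^{(u,v)}$ is non-empty. A coset $g\bU$ is fixed by $(u,v)$ exactly when $ugv\inv\bU=g\bU$, i.e. when $g\inv ug\in v\bU$; so there is $g\in\bG$ with $g\inv ug\in v\bU$. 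Choosing a cocharacter $\lambda\colon\mathbb{G}_m\to\bL$ with $\bP=\bP(\lambda)$, $\bL=Z_\bG(\lambda)$ and $\bU=\bU(\lambda)$, contraction gives $\lim_{t\to0}\lambda(t)(g\inv ug)\lambda(t)\inv=v$, whence $v\in\overline{u^\bG}$ and so $v^\bG\subseteq\overline{u^\bG}$. On the other hand $g\inv ug\in v\bU\subseteq v^\bL\cdot\bU$, so $u\in\bG\cdot(v^\bL\bU)\subseteq\overline{\Ind_\bL^\bG(v^\bL)}$ by the very definition of the induced class in~\cite{LS}; this gives $\overline{u^\bG}\subseteq\overline{\Ind_\bL^\bG(v^\bL)}$, completing~(i).

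For part~(ii) I would start from Proposition~\ref{formula2}, which after clearing the factor $|v^\LF|\inv|A(v)|\inv$ reads
\[
|v^\LF|\,|A(v)|\,Q_\bL^\bG(u,v)=\sum_\CI\sum_{\iota\in\IG^F,\ \gamma\in\CI^F}
\overline{\CY_\iota(u)}\,\CY_\gamma(v)\,R_{\iota,\gamma}\,q^{c_\iota-c_\gamma}.
\]
Here $R_{\iota,\gamma}$ is a polynomial in $q$ with integer coefficients by \cite[Lemma 6.9]{DLM3}, and the non-vanishing of a summand forces $C_\iota=u^\bG$ and $C_\gamma=v^\bL$, since $\CY_\iota$ and $\CY_\gamma$ are characteristic functions of local systems and hence supported on their respective classes. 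For such a summand the support condition on $R_{\iota,\gamma}$ coming from the generalized Springer correspondence (the geometric content matching the inclusion proved in~(i)) gives $C_\iota\subseteq\overline{\Ind_\bL^\bG C_\gamma}$; as $\Ind_\bL^\bG$ preserves codimension, $\codim C_\iota\ge\codim C_\gamma$, so $c_\iota-c_\gamma\ge0$ is a non-negative integer and $q^{c_\iota-c_\gamma}$ is an honest monomial. Finally the values $\CY_\iota(u)$ and $\CY_\gamma(v)$ are roots of unity, independent of $q$ for the fixed rational forms of $u$ and $v$. Consequently the right-hand side is a genuine polynomial in $q$ whose coefficients are fixed cyclotomic integers.

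It remains to see that these coefficients are rational, and here lies the \emph{main obstacle}. The two-variable Green function $Q_\bL^\bG(u,v)$ is rational-valued --- being, up to the rational factor $|v^\LF||A(v)|$, a trace of the finite-order element $(u,v)$ on cohomology whose Galois conjugates permute among themselves, as for ordinary Green functions --- so for every admissible $q$ the displayed sum lies in $\mathbb{Q}$. Since it is a polynomial in $q$ with cyclotomic-integer coefficients agreeing with a rational number at infinitely many $q$, each coefficient is fixed by $\mathrm{Gal}(\mathbb{Q}(\zeta)/\mathbb{Q})$ and thus lies in $\mathbb{Q}\cap\mathbb{Z}[\zeta]=\mathbb{Z}$. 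Hence $|v^\LF||A(v)|Q_\bL^\bG(u,v)\in\mathbb{Z}[q]$, which yields both statements of~(ii). The delicate point is precisely the rationality and Galois-stability of $Q_\bL^\bG(u,v)$ together with the assertion that the $\CY_\iota(u)$ are $q$-independent roots of unity: these are standard features of the normalised characteristic functions, but they are the non-formal inputs on which the argument rests.
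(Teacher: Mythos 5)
Your part~(i) is correct, and it is a genuinely different argument from the paper's. The paper deduces~(i) from Proposition~\ref{formula2} together with the support condition of \cite[Lemma 6.9(i)]{DLM3} ($R_{\iota,\gamma}=0$ unless $C_\gamma\subseteq\overline{C_\iota}\subseteq\overline{\Ind_\bL^\bG(C_\gamma)}$) and the fact that $\CY_\iota(u)$, $\CY_\gamma(v)$ vanish off $C_\iota$, $C_\gamma$. Your route --- the fixed-point principle for automorphisms of order a power of $p$, the identification of the fixed points of $(u,v)$ on $\bG/\bU$ as the cosets $g\bU$ with $g\inv ug\in v\bU$, contraction by a cocharacter $\lambda$ adapted to $(\bP,\bL,\bU)$, and the Lusztig--Spaltenstein inclusion $v^\bL\bU\subseteq\overline{\Ind_\bL^\bG(v^\bL)}$ --- is sound, more elementary, and in fact stronger: it bypasses Proposition~\ref{formula2} entirely, so it needs no hypothesis on $Z\bG$ or on $q$. (Like the paper's proof, it yields the inclusion into the \emph{closure} of the induced class, which is what the statement must mean.)

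Part~(ii), however, has a genuine gap. Your argument rests on the assertion that $R_{\iota,\gamma}$ is a polynomial in $q$ with \emph{integer} coefficients ``by \cite[Lemma 6.9]{DLM3}''. That lemma provides the polynomials $R_{\iota,\gamma}$ and their support property, but not the integrality of their coefficients: this integrality (in the form $q^{c_\iota-c_\gamma}R_{\iota,\gamma}\in\mathbb{Z}[q]$) is precisely the content of the auxiliary lemma that the paper proves inside its proof of the corollary, and that proof is not formal --- it uses the matrix identity $\tilde R=P_\bG C_\bG I C_\bL\inv P_\bL\inv$ from the proof of \cite[Lemma 6.9]{DLM3}, the unitriangularity of $P_\bG$ and $P_\bL$ with entries integral polynomials in $q$ (from \cite[6.5]{usupp}), the integrality of the induction-multiplicity matrix $I$, and the codimension inequality. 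Your codimension argument (Lemma 6.9(i) plus the fact that Lusztig--Spaltenstein induction preserves codimension) is fine and is essentially the paper's own argument for the inequality $c_\iota-c_\gamma\ge 0$, but it only controls the monomial $q^{c_\iota-c_\gamma}$, not the coefficients of $R_{\iota,\gamma}$. Without (algebraic) integrality of those coefficients, your final interpolation-plus-Galois step only yields that $|v^\LF|\,|A(v)|\,Q_\bL^\bG(u,v)$ is a polynomial with \emph{rational} coefficients, which falls short of the statement. Two smaller inaccuracies, neither fatal: the values $\CY_\iota(u)$ are in general character values of the possibly non-abelian group $A(u)$, hence cyclotomic integers but not roots of unity (they may even vanish); and the rationality of $Q_\bL^\bG(u,v)$ is most safely justified, as the paper does, by the fact that $|\LF|\,Q_\bL^\bG(u,v)$ is a Lefschetz number (\cite[8.1.3]{DMbook}) rather than by an appeal to Galois stability of traces.
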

\begin{proof}
For (i), we use \cite[Lemma 6.9(i)]{DLM3} which states that
$R_{\iota,\gamma}=0$ unless $C_\gamma\subseteq\overline{C_\iota}\subseteq
\overline{\Ind_\bL^\bG(C_\gamma)}$.
Since $\tY_\kappa(v)$  vanishes unless $C_\kappa\owns
v$,  the only non-zero  terms in the  formula of proposition \ref{formula2}
have  $C_\gamma\owns v$,  whence the  result since  $\tY_\iota(u)$ vanishes
unless $C_\iota\owns u$.

For (ii), we start with
\begin{lemma}
$q^{c_\iota-c_\gamma}R_{\iota,\gamma}$ is a polynomial in $q$ with integral
coefficients.
\end{lemma}
\begin{proof}
The defining equation of the matrix
$\tilde R=\{q^{c_\iota-c_\gamma}R_{\iota,\gamma}\}_{\iota,\gamma}$
reads (see the proof of \cite[Lemma 6.9(i)]{DLM3}):
$$\tilde R=P_\bG C_\bG I C_\bL\inv P_\bL\inv$$
where $C_\bG$ is the diagonal matrix with diagonal coefficients
$q^{c_\iota}$ for $\iota\in\IG$, and $C_\bL$ is the similar matrix for $\bL$ and
$\CI$, where $P_\bG$ is the matrix with coefficients
$\{P_{\iota,\iota'}\}_{\iota,\iota'\in\IG}$ where these polynomials are those
defined in \cite[6.5]{usupp}, and $P_\bL$ is the similar matrix for $\bL$ and
$\CI$, and finally $I$ is the matrix with coefficients
$$I_{\iota,\gamma}=\scal{\Ind^{W_\bG(\Lo)F}_{W_\bL(\Lo)F}
\tilde\varphi_\gamma}{\tilde\varphi_\iota}{W_\bG(\Lo)F}$$
where $\tilde\varphi_\gamma$ is the character of $W_\bL(\Lo)F$ which
corresponds by the generalised Springer correspondence to $\gamma$
(and similarly for $\tilde\varphi_\iota$). Since $P_\bL$ and $P_\bG$ are
unitriangular matrices with coefficients integral polynomials in $q$, thus
$P_\bL\inv$ also, it suffices to prove that  $C_\bG I C_\bL\inv$ has
coefficients polynomial in $q$, or equivalently that
$$\text{ if }
\scal{\Ind^{W_\bG(\Lo)F}_{W_\bL(\Lo)F}
 \tilde\varphi_\gamma}{\tilde\varphi_\iota}{W_\bG(\Lo)F}\ne 0\text{, then }
c_\iota-c_\gamma\ge 0.$$
We now use \cite[Proposition 2.3(ii)]{DLM3} which says that the non-vanishing
above implies $C_\gamma\subseteq\overline{C_\iota}\subseteq
\overline{\Ind_\bL^\bG(C_\gamma)}$. We now use that,
according to the definitions, $c_\iota-c_\gamma=\dim \CB_u^\bG -\dim \CB_v^\bL$
where $\CB_u^\bG$ is the variety of Borel subgroups of $\bG$ containing
an element $u$ of the support of $\iota$, and
where $\CB_v^\bL$ is the variety of Borel subgroups of $\bL$ containing
an element $v$ of the support of $\gamma$. Now the lemma follows from the fact
that by \cite[Theorem 1.3 (b)]{LS} we have $\dim \CB_u^\bG =\dim \CB_v^\bL$ if
$u$ is an
element of $\Ind_\bL^\bG(C_\gamma)$, and that $\dim \CB_u^\bG$ is greater
for  $u\in\overline{\Ind_\bL^\bG(C_\gamma)}-\Ind_\bL^\bG(C_\gamma)$.
\end{proof}
Now (ii) results from the lemma: since the $\tY$ have values algebraic
integers, by Proposition \ref{formula2}
the expression in (ii) is a polynomial in $q$ with coefficients algebraic
integers. But, since
$|\LF| Q_\bL^\bG(u,v)$ is a Lefschetz number (see for example
\cite[8.1.3]{DMbook}), the expression in (ii) is a rational number;
since this is true for an infinite number of integral values of $q$ the
expression in (ii) is a polynomial with integral coefficients.
\end{proof}

\section*{Scalar products of induced Gelfand-Graev characters}
The pretext for this section is as follows:
in   \cite[Remark 3.10]{BM}  is pointed   the  problem  of  computing
$\scal{R^\bG_\bL\Gamma_\iota}{R^\bG_\bL\Gamma_\iota}\GF$  when $(\bG,F)$ is
simply connected of  type $\lexp 2E_6$, when $\bL$ is of  type $A_2\times A_2$, and
when $\iota$ corresponds
to a faithful character of $Z(\bL)/Z^0(\bL)$, and checking that
the value is the same as given by the Mackey formula.
We show now various ways to do this computation, where in this section
we assume $p$ and $q$ large enough for all the results of
\cite{DLM3} to hold (in particular, we assume $p$ good for $\bG$,
thus not solving the problem of loc.\ cit.\ where we need $q=2$).

Let  $Z=Z(\bG)$,  and  let  $\Gamma_z$  be the Gelfand-Graev character
parameterized  by $z\in\HFZ$, see for instance \cite[Definition 2.7]{DLM1}. Let $u_z$ be a representative of the regular
unipotent class parametrized by $z$.
As  in  \cite[7.5 (a)]{usupp}  for  $\iota$ an
$F$-stable  local system  on the  regular unipotent  class
we  define $\Gamma_\iota=c\sum_{z\in\HFZ}\CY_\iota(u_z)\Gamma_z$ where
$c=\frac{|Z/Z^0|}{|\HFZ|}$.

Note that the cardinality $|C_\bG(u_z)^F|$ is independent of $z$; actually it is
equal to $|Z(\bG)^F| q^{\rkss\bG}$ (see \cite[15.5]{Bonnafe}). Thus we will denote
this cardinality $|C_\GF(u)|$ where $u\in\GF$ is any regular unipotent element.
There exists a character $\zeta$ of $\HFZ$ and a root of unity $b_\iota$
(see \cite[above 1.5]{DLM2}) such that $\CY_\iota(u_z)=b_\iota\zeta(z)$.
With these notations, we have

\begin{proposition}\label{Gamma=Y}
We have $\Gamma_\iota=\eta_\bG\sigma_\zeta\inv
c|C_\GF(u)|D\CY_\iota$ where $\eta_\bG$ and $\sigma_\zeta$ are defined as in
\cite[2.5]{DLM2},
\end{proposition}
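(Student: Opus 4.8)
The plan is to read off $\Gamma_\iota$ as a Mellin transform over $\HFZ$ of the family $(\Gamma_z)_z$ and to identify that transform with a multiple of $D\CY_\iota$ via the duality relation of \cite[2.5]{DLM2}.

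First I would insert the relation $\CY_\iota(u_z)=b_\iota\zeta(z)$ into the defining formula $\Gamma_\iota=c\sum_{z\in\HFZ}\CY_\iota(u_z)\Gamma_z$, which gives
$$\Gamma_\iota=c\,b_\iota\sum_{z\in\HFZ}\zeta(z)\,\Gamma_z.$$
Thus, up to the root of unity $b_\iota$, the character $\Gamma_\iota$ is the $\zeta$-component of the Mellin transform of the Gelfand-Graev characters, and the claim becomes the assertion that this component equals $\eta_\bG\sigma_\zeta\inv c\,|C_\GF(u)|\,D\CY_\iota$.

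Both sides are unipotently supported: each $\Gamma_z$ is induced from a linear character of a maximal unipotent subgroup, hence vanishes off the unipotent variety, while $\CY_\iota$ is supported on the regular unipotent class, so $D\CY_\iota$ is again unipotently supported since Lusztig induction and restriction preserve this property. I would therefore work inside the space of unipotently supported class functions, where the $\CY_\kappa$ form a basis. The conceptual input is the compatibility of the duality $D$ with the geometry of the regular unipotent locus: the duals of the $\Gamma_z$ are, up to explicit scalars, the characteristic functions $\CY_\iota$, the two families being interchanged by the Mellin transform over $\HFZ$. This is exactly what \cite[2.5]{DLM2} records, the sign $\eta_\bG$ attached to $\bG$ and the scalar $\sigma_\zeta$ attached to $\zeta$ furnishing the transition constants. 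Substituting that relation into the display and applying Mellin inversion collapses the sum to the single term $D\CY_\iota$; the factor $|C_\GF(u)|$ enters because $\CY_\iota$ is normalized as a characteristic function, whereas $\Gamma_z$ carries the order of the centralizer of a regular unipotent element on the regular classes.

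The substance of the proof lies in the bookkeeping of this scalar, and that is where I expect the only genuine difficulty. One must keep the conjugation conventions straight --- whether $\zeta(z)$ or $\overline{\zeta(z)}$, and $\sigma_\zeta$ or $\sigma_\zeta\inv$, appear --- combine the constant $c=|Z/Z^0|/|\HFZ|$ with the inversion factor $|\HFZ|$, and track the interaction of $b_\iota$ with $\sigma_\zeta$. Once \cite[2.5]{DLM2} is rewritten in the present normalization these factors assemble into exactly $\eta_\bG\sigma_\zeta\inv c\,|C_\GF(u)|$, and the proposition follows.
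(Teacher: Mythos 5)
Your proposal takes essentially the same route as the paper: after inserting $\CY_\iota(u_z)=b_\iota\zeta(z)$, the paper applies $D$ and uses the expansion $D\Gamma_z=\sum_{z'}c_{z,z'}\gamma_{z'}$ of \cite[(3.5')]{DLM1} together with \cite[lemma 2.3]{DLM2} (namely $c_{z,z'}=c_{zz^{\prime-1},1}$ and $\sum_z\zeta(z)c_{z,1}=\eta_\bG\sigma_\zeta\inv$), so the double sum factorizes exactly as in your Mellin-transform argument, and $\sum_{z'}\zeta(z')\gamma_{z'}=b_\iota\inv|C_\GF(u)|\CY_\iota$ supplies the centralizer factor (it is the normalized characteristic functions $\gamma_{z'}$ occurring in $D\Gamma_z$, not $\Gamma_z$ itself, that carry the value $|C_\GF(u)|$). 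The bookkeeping you defer is precisely the paper's short computation, and the substantive input is this formula for $D\Gamma_z$ rather than \cite[2.5]{DLM2} itself, which only defines $\eta_\bG$ and $\sigma_\zeta$.
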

\begin{proof}
This proposition could be deduced from \cite[Theorem 2.8]{DLM4} using
\cite[Theorem 2.7]{DLM2}. We give here a more elementary proof.

With the notations of \cite[(3.5')]{DLM1} we have
$D\Gamma_z=\sum_{z'\in \HFZ}c_{z,z'}\gamma_{z'}$.
By \cite[lemma 2.3]{DLM2} we have $c_{z,z'}=c_{zz^{\prime-1},1}$ and
$\sum_{z\in\HFZ}\zeta(z)c_{z,1}=\eta_\bG\sigma_\zeta\inv$. It follows that
$$\begin{aligned}
c\inv b_\iota\inv D\Gamma_\iota&=\sum_{z\in \HFZ}\zeta(z)D\Gamma_z
=\sum_{z,z'\in\HFZ}\zeta(z)c_{z,z'}\gamma_{z'}\\
&=\sum_{z,z'\in\HFZ}c_{zz^{\prime-1},1}\zeta(z)\gamma_{z'}\\
&=\sum_{z'\in\HFZ}\zeta(z')\gamma_{z'}\sum_{z''\in\HFZ}c_{z'',1}\zeta(z'')\\
&=\eta_\bG\sigma_\zeta\inv\sum_{z'\in\HFZ}\zeta(z')\gamma_{z'}
 =\eta_\bG\sigma_\zeta\inv b_\iota\inv|C_\GF(u)|\CY_\iota
\end{aligned}$$
\end{proof}

\begin{proposition}\label{RLG Gamma}
If $\iota$ is a local
system supported on the regular unipotent class of $\bL$ and $\CI$ denotes
its block, we have
\begin{multline*}
\scal{R_\bL^\bG\Gamma^\bL_\iota}{R_\bL^\bG\Gamma^\bL_{\iota}}\GF=\\
|\frac{Z(\bL)}{Z^0(\bL)}|^2
\sum_{w\in W_\bL(\Lo)}\frac{|\ZLowF||W_\bG(\Lo)|}
{|W_\bL(\Lo)|^2}\frac{|(wF)^{W_\bG(\Lo)}\cap{W_\bL(\Lo)}|}
{|(wF)^{W_\bG(\Lo)}|}.
\end{multline*}
\end{proposition}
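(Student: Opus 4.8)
The plan is to reduce the whole computation to the characteristic function $\CY_\iota$, and then to feed the resulting scalar product into the orthogonality relations of the generalized Green functions on $\bG$ and on $\bL$ simultaneously.

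First I would apply Proposition \ref{Gamma=Y} inside $\bL$, writing $\Gamma_\iota^\bL=\eta_\bL\sigma_\zeta\inv c\,|C_\LF(u)|\,D\CY_\iota$, where $c=|Z(\bL)/Z^0(\bL)|/|H^1(F,Z(\bL))|$ and $|C_\LF(u)|=|Z(\bL)^F|q^{\rkss\bL}$. Since $\eta_\bL$ and $\sigma_\zeta$ are roots of unity, and since the duality $D$ commutes with Lusztig induction (up to a sign that is irrelevant here) and is an isometry, this gives
$$\scal{R_\bL^\bG\Gamma_\iota^\bL}{R_\bL^\bG\Gamma_\iota^\bL}\GF=(c\,|C_\LF(u)|)^2\,\scal{R_\bL^\bG\CY_\iota}{R_\bL^\bG\CY_\iota}\GF.$$

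Next I would compute the right-hand scalar product. Using adjunction I rewrite it as $\scal{\CY_\iota}{\lexp *R_\bL^\bG R_\bL^\bG\CY_\iota}\LF$, and since $\CY_\iota$ is supported on the regular unipotent class, only the values of $\lexp *R_\bL^\bG R_\bL^\bG\CY_\iota$ on regular unipotent elements matter; by the second formula of Proposition \ref{unipotent} these depend only on the unipotent part of $R_\bL^\bG\CY_\iota$. I would then expand $\CY_\iota$ in the orthogonal basis $\{Q^{\bL,\CI}_{wF}\}$ of unipotently supported class functions on $\LF$, so that by the identity $R_\bL^\bG Q^{\bL,\CI}_{wF}=Q^{\bG,\IG}_{wF}$ on unipotent elements (established in the proof of Proposition \ref{somme}) the unipotent part of $R_\bL^\bG\CY_\iota$ becomes the corresponding combination of the $Q^{\bG,\IG}_{wF}$. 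Substituting the formula of Proposition \ref{somme} for the remaining factor $Q_\bL^\bG(\ud,v\inv)$ then reduces the expression to scalar products $\scal{Q^{\bG,\IG}_{wF}}{Q^{\bG,\IG}_{w'F}}\GF$ and $\scal{Q^{\bL,\CI}_{wF}}{Q^{\bL,\CI}_{w'F}}\LF$, each governed by the orthogonality of generalized Green functions (\cite[Corollary 3.5]{DLM3}).

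The crucial point is the interaction of these two orthogonalities. The functions $Q^{\bL,\CI}_{wF}$ are indexed by $W_\bL(\Lo)$-twisted classes in $W_\bL(\Lo)F$, whereas their inductions $Q^{\bG,\IG}_{wF}$ are orthogonal with respect to $W_\bG(\Lo)$-twisted conjugacy; since $W_\bL(\Lo)\subseteq W_\bG(\Lo)$, several $W_\bL(\Lo)$-classes fuse into a single $W_\bG(\Lo)$-class, and the number of $W_\bL(\Lo)$-elements inside the $W_\bG(\Lo)$-class of $wF$ produces exactly the factor $|(wF)^{W_\bG(\Lo)}\cap W_\bL(\Lo)|/|(wF)^{W_\bG(\Lo)}|$ appearing in the statement. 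Here I would also use the explicit values $\CY_\iota(u_z)=b_\iota\zeta(z)$ on the regular class, together with the fact that $|C_\LF(u)|$ is independent of $z$, to evaluate the expansion coefficients of $\CY_\iota$. Collecting the cardinalities $|\ZLowF|$, the orders $|W_\bL(\Lo)|$ and $|W_\bG(\Lo)|$, and the centralizer cardinalities, and combining them with the prefactor $(c\,|C_\LF(u)|)^2$ from the first step, should yield the stated formula with overall constant $|Z(\bL)/Z^0(\bL)|^2$.

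The main obstacle I anticipate is precisely this bookkeeping: tracking how the $\bL$-orthogonality and the $\bG$-orthogonality combine through the fusion of twisted classes, and verifying that all the powers of $q$, the factors $|\ZLowF|$, and the various centralizer and component-group orders cancel down to the clean constant $|Z(\bL)/Z^0(\bL)|^2$. A secondary point requiring care is the justification that only the unipotent part of $R_\bL^\bG\CY_\iota$ contributes, that is, that Lusztig restriction of a unipotently supported function, evaluated at unipotent elements, depends only on the unipotent part of its argument; this follows from the second formula of Proposition \ref{unipotent} but must be invoked explicitly, since $R_\bL^\bG\CY_\iota$ itself is not unipotently supported.
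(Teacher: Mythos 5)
Your plan has the right skeleton, and it is in fact close to the paper's own computation: the paper also writes the Gelfand--Graev datum as an explicit combination of the $Q^{\bL,\CI}_{wF}$, pushes it to $\bG$ via $R_\bL^\bG Q^{\bL,\CI}_{wF}=Q^{\bG,\IG}_{wF}$, and applies the orthogonality relations of \cite[3.5]{DLM3}, the fusion of $W_\bL(\Lo)$-classes inside $W_\bG(\Lo)$ producing exactly the factor $|(wF)^{W_\bG(\Lo)}\cap W_\bL(\Lo)|/|(wF)^{W_\bG(\Lo)}|$; your description of that mechanism is correct. The difference is at the entry point: instead of Proposition \ref{Gamma=Y}, duality and adjunction (your detour through Propositions \ref{unipotent} and \ref{somme} is in any case superfluous, since Lusztig induction preserves unipotent support --- so your ``secondary point'' is moot and one can apply orthogonality in $\GF$ directly), the paper quotes the last formula of the proof of \cite[Proposition 6.1]{DLM3} together with the key fact that $\tilde Q_\iota=1$ when $\iota$ has regular support, which yields at once, up to a root of unity, $\Gamma^\bL_\iota=|A(C_\iota)|\,|W_\bL(\Lo)|\inv\sum_w|\ZLowF|\,Q^{\bL,\CI}_{wF}$, after which orthogonality finishes the proof in a few lines.

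There are, however, two genuine gaps in your version, and the first is an outright error. First, $\sigma_\zeta$ is \emph{not} a root of unity: it is a Gauss sum, with $\sigma_\zeta\overline{\sigma_\zeta}=q^{\rkss\Lo}$ by \cite[proposition 2.5]{DLM2} --- the paper itself uses precisely this identity in the corollary following the proposition. Your first reduction should therefore read $\scal{R_\bL^\bG\Gamma^\bL_\iota}{R_\bL^\bG\Gamma^\bL_\iota}\GF=q^{-\rkss\Lo}\,(c\,|C_\LF(u)|)^2\,\scal{R_\bL^\bG\CY_\iota}{R_\bL^\bG\CY_\iota}\GF$, and dropping the factor $q^{-\rkss\Lo}=q^{-2c_\iota}$ makes the final answer wrong by $q^{\rkss\Lo}$. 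Already for $\bG=\bL=SL_2$ with $\iota$ cuspidal one has $\scal{\Gamma_\iota}{\Gamma_\iota}\GF=\scal{\Gamma_1-\Gamma_{-1}}{\Gamma_1-\Gamma_{-1}}\GF=4$, as the proposition asserts, whereas $(c\,|C_\LF(u)|)^2\scal{\CY_\iota}{\CY_\iota}\GF=(2q)^2/q=4q$. Second, the compensating factor $q^{2c_\iota}$ must come out of the expansion coefficients of $\CY_\iota$ in the basis $\{Q^{\bL,\CI}_{wF}\}$, and the inputs you propose for computing them ($\CY_\iota(u_z)=b_\iota\zeta(z)$ and the independence of $|C_\LF(u_z)|$ of $z$) do not suffice: one also needs the values of the $Q^{\bL,\CI}_{wF}$ themselves on the regular class. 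This is exactly the fact the paper's proof leads with: by \cite[(4.4)]{DLM3}, the uniqueness of the regular-support system in a block (\cite[Corollary 1.10]{DLM2}) and $\tilde Q_\iota=1$ (\cite{DLM3}, beginning of section 7), one gets $Q^{\bL,\CI}_{wF}(u_z)=q^{c_\iota}\CY_\iota(u_z)$ independently of $w$, whence $\CY_\iota=q^{c_\iota}|H^1(F,Z(\bL))|\,|C_\LF(u)|\inv|W_\bL(\Lo)|\inv\sum_w|\ZLowF|Q^{\bL,\CI}_{wF}$. With both corrections your route does close up and reproduces the stated formula; as written, it would produce $|Z(\bL)/Z^0(\bL)|^2q^{\rkss\Lo}$ times the sum, which is false.
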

Note that in a given block $\CI$ there is at most one local system supported
by the regular unipotent class (see \cite[Corollary 1.10]{DLM2}).
\begin{proof}
When $\iota$ is supported by the regular unipotent class we have $\tilde
Q_\iota=1$, see the begining of section 7, bottom of page 130 in \cite{DLM3}.
Using this in the last formula of the proof of
\cite[Proposition 6.1]{DLM3}, we get that $\Gamma_\iota^\bL$ is up to a root
of unity equal to
$|A(C_\iota))| |W_\bL(\Lo)|\inv\sum_{w\in W_\bL(\Lo)}
|\ZLowF|Q_{wF}^{\bL,\CI}$.
Since $R_\bL^\bG Q_{wF}^{\bL,\CI}=Q_{wF}^{\bG,\IG}$, we get
\begin{equation*}
\begin{split}
\scal{R_\bL^\bG\Gamma_\iota}{R_\bL^\bG\Gamma_{\iota}}\GF&=\\
|A(C_\iota))|^2|W_\bL&(\Lo)|^{-2}\sum_{w,w'\in W_\bL(\Lo)}
|\ZLowF||Z^0(\Lo)^{w'F}|\scal{Q_{wF}^{\bG,\IG}}{Q_{w'F}^{\bG,\IG}}\GF.
\end{split}
\end{equation*}
By \cite[3.5]{DLM3} the last scalar product is zero unless $wF$ and $w'F$ are
conjugate in $W_\bG(\Lo)$,
and is equal to $|C_{W_\bG(\Lo)}(wF)|/|\ZLowF|$ otherwise.
We get
\begin{equation*}
\begin{split}
\scal{R_\bL^\bG\Gamma_\iota}{R_\bL^\bG\Gamma_\iota}\GF&=
|A(C_\iota))|^2|W_\bL(\Lo)|^{-2}\\&\sum_{w\in W_\bL(\Lo)}
|\ZLowF||C_{W_\bG(\Lo)}(wF)||(wF)^{W_\bG(\Lo)}\cap{W_\bL(\Lo)}|,
\end{split}
\end{equation*}
which gives the formula of the proposition since $A(C_\iota)=Z(\bL)/Z(\bL)^0$.
\end{proof}
\begin{corollary}
Let $\iota$ and $\iota'$ be local
systems supported on the regular unipotent class of $\bG$, and $\CI$, $\CI'$
be their respective blocks: then
\begin{enumerate}
\item
$
\scal{\Gamma^\bG_\iota}{\Gamma^\bG_{\iota'}}\GF=
\begin{cases}
0&\text{if }\iota\neq\iota',\\
|\frac{Z(\bG)}{Z^0(\bG)}|^2 |Z^0(\bG)^F|q^{\dim Z(\Lo)-\dim Z(\bG)}
&\text{if }\iota=\iota'.\end{cases}
$
\item
$\scal{\CY_\iota}{\CY_{\iota'}}\GF=
\begin{cases}
0&\text{if }\iota\neq\iota',\\
q^{-\rkss\bG}|Z^0(\bG)^F|\inv&\text{if }\iota=\iota'.\end{cases}
$
\end{enumerate}
\end{corollary}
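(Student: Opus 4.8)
The plan is to prove (ii) first by a direct computation and then to deduce (i) from it together with the two preceding propositions. For (ii), recall that $\CY_\iota$ is supported on the regular unipotent class with $\CY_\iota(u_z)=b_\iota\zeta(z)$, $|b_\iota|=1$, and that the class size $|u_z^\GF|=|\GF|/|C_\GF(u)|$ does not depend on $z$. Hence
$$\scal{\CY_\iota}{\CY_{\iota'}}\GF=\frac{b_\iota\overline{b_{\iota'}}}{|C_\GF(u)|}\sum_{z\in\HFZ}\zeta(z)\overline{\zeta'(z)}.$$
Distinct local systems on the regular class give distinct characters $\zeta$ of $\HFZ$, so orthogonality of characters makes the sum vanish when $\iota\neq\iota'$ and equal to $|\HFZ|$ when $\iota=\iota'$. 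It then remains to simplify $|\HFZ|/|C_\GF(u)|$ using $|C_\GF(u)|=|Z(\bG)^F|q^{\rkss\bG}$, the Lang sequence $1\to Z^0(\bG)^F\to Z(\bG)^F\to(Z(\bG)/Z^0(\bG))^F\to 1$, and $|\HFZ|=|(Z(\bG)/Z^0(\bG))^F|$; this gives exactly $q^{-\rkss\bG}|Z^0(\bG)^F|\inv$, proving (ii). The off-diagonal case of (i) is then immediate: by Proposition~\ref{Gamma=Y}, $\Gamma_\iota$ is the image of $\CY_\iota$ under the Alvis--Curtis duality $D$ up to a scalar of the form $\eta_\bG\sigma_\zeta\inv c|C_\GF(u)|$, and since $D$ is an isometry, $\scal{\Gamma_\iota}{\Gamma_{\iota'}}\GF$ is a scalar multiple of $\scal{\CY_\iota}{\CY_{\iota'}}\GF=0$.

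For the diagonal case of (i) I would specialize Proposition~\ref{RLG Gamma} to $\bL=\bG$, so that $R_\bL^\bG\Gamma_\iota=\Gamma_\iota$ and $W_\bL(\Lo)=W_\bG(\Lo)$. The conjugacy ratio $|(wF)^{W_\bG(\Lo)}\cap W_\bG(\Lo)|/|(wF)^{W_\bG(\Lo)}|$ is then identically $1$, and the formula of Proposition~\ref{RLG Gamma} collapses to
$$\scal{\Gamma_\iota}{\Gamma_\iota}\GF=\Big|\frac{Z(\bG)}{Z^0(\bG)}\Big|^2\frac1{|W_\bG(\Lo)|}\sum_{w\in W_\bG(\Lo)}|\ZLowF|.$$
Thus everything reduces to the averaging identity
$$\frac1{|W_\bG(\Lo)|}\sum_{w\in W_\bG(\Lo)}|\ZLowF|=|Z^0(\bG)^F|\,q^{\dim Z(\Lo)-\dim Z(\bG)},$$
which I expect to be the main obstacle.

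To prove this identity my approach is the Lefschetz--Burnside formula $\frac1{|W|}\sum_{w\in W}|S^{wF}|=|(S/W)^F|$, valid since $H^*_c(S/W)=H^*_c(S)^W$, applied to the torus $S=Z^0(\Lo)$ with $W=W_\bG(\Lo)$. The group $W_\bG(\Lo)$ acts on $Z^0(\Lo)$ as a reflection group fixing $Z^0(\bG)$ pointwise, so by Chevalley--Shephard--Todd (as in Steinberg's identification $S/W\cong Z^0(\bG)\times\mathbb A^{\dim Z(\Lo)-\dim Z(\bG)}$ up to point-count) one gets $|(S/W)^F|=|Z^0(\bG)^F|q^{\dim Z(\Lo)-\dim Z(\bG)}$. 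The subtle point is that $|S^{wF}|=|\det_{Y(S)}(qw\phi-1)|$ involves a genuine absolute value (the factors at eigenvalue $-1$ are negative), so one cannot just average term-by-term in exterior powers; routing the computation through the quotient variety, or equivalently through a Solomon-type identity for the reflection group $W_\bG(\Lo)$, is what makes the absolute values combine into the clean power $q^{\dim Z(\Lo)-\dim Z(\bG)}$.
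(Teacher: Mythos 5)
Your proposal is correct, but it is organized quite differently from the paper's proof, essentially reversing the logical order. The paper proves (i) first: the vanishing for $\iota\neq\iota'$ comes from the orthogonality of the functions $Q^{\bG,\IG}_{wF}$ attached to distinct blocks (\cite[V, 24.3.6]{CS}), and the diagonal case from the $\bL=\bG$ specialization of Proposition~\ref{RLG Gamma} together with \cite[Corollary 5.2]{DLM3}, which evaluates $|W_\bG(\Lo)|\inv\sum_{w}|\ZLowF|$ as $q^{-2c_\iota}|C^0_\bG(u)^F|$; then (ii) is deduced from (i) via Proposition~\ref{Gamma=Y} and the isometry $D$. You instead prove (ii) first by a direct computation on the rational regular unipotent classes (this is correct: $\iota\mapsto\zeta$ is a bijection from $F$-stable local systems on the regular class onto $\Irr(\HFZ)$, so character orthogonality applies, and $|\HFZ|/|C_\GF(u)|=q^{-\rkss\bG}|Z^0(\bG)^F|\inv$ follows from Lang's theorem); you then get the vanishing in (i) from (ii) and Proposition~\ref{Gamma=Y}, and for the diagonal case you replace the appeal to \cite[Corollary 5.2]{DLM3} by a direct evaluation of the average $|W_\bG(\Lo)|\inv\sum_w|\ZLowF|$. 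This makes (ii) and the vanishing statement independent of the Green-function machinery; the cost is that the whole content of the diagonal case is concentrated in your averaging identity.

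That identity is where you must be careful, and your attribution there is off. The statement $|W_\bG(\Lo)|\inv\sum_{w}|\ZLowF|=|Z^0(\bG)^F|\,q^{\dim Z(\Lo)-\dim Z(\bG)}$ is not a formal property of relative Weyl groups: it requires (a) that $W_\bG(\Lo)$ act on $Y(Z^0(\Lo))\otimes\mathbb{Q}$ as a reflection group, and (b) that its fixed subspace be \emph{exactly} $Y(Z^0(\bG))\otimes\mathbb{Q}$ --- not merely that it contain it, which is all that ``fixing $Z^0(\bG)$ pointwise'' asserts. Both hold here precisely because $(\Lo,\io)$ is a cuspidal pair; this is Lusztig's theorem on relative Weyl groups of cuspidal pairs (Intersection cohomology complexes on a reductive group, Invent.\ Math.\ 75 (1984), Theorem 9.2), not Chevalley--Shephard--Todd or Steinberg, which enter only afterwards. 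Moreover the variety-level identification $Z^0(\Lo)/W_\bG(\Lo)\cong Z^0(\bG)\times\mathbb{A}^{\dim Z(\Lo)-\dim Z(\bG)}$ can fail (quotients of tori by reflection groups may be singular; Steinberg's theorem needs a simply connected situation), so the computation should be run the second way you sketch: writing $F=q\phi$ on $Y=Y(Z^0(\Lo))\otimes\mathbb{Q}$, one has $|\ZLowF|=\det_Y\bigl(q-(w\phi)\inv\bigr)$, which is already positive --- no absolute values needed --- and averaging over $w$ gives $\sum_k(-1)^kq^{\dim Z(\Lo)-k}\Trace\bigl(\phi\inv\mid(\Lambda^kY)^{W_\bG(\Lo)}\bigr)$; by Solomon's identity $|W|\inv\sum_w\det(1+tw)=1$, valid for a reflection group with no nonzero invariant vectors and applied to the canonical $W_\bG(\Lo)$-stable complement of $Y(Z^0(\bG))\otimes\mathbb{Q}$, this collapses to $q^{\dim Z(\Lo)-\dim Z(\bG)}\det\bigl(q-\phi\inv\mid Y(Z^0(\bG))\otimes\mathbb{Q}\bigr)=q^{\dim Z(\Lo)-\dim Z(\bG)}|Z^0(\bG)^F|$. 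With Lusztig's theorem cited, your argument is complete and is a genuine alternative to the paper's use of \cite[Corollary 5.2]{DLM3}.
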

\begin{proof}
The functions $Q^{\bG,\IG}_{wF}$ and $Q^{\bG,\CI'_\bG}_{w'F}$
are orthogonal to each other when $\IG\neq\CI'_\bG$ (see \cite[V, 24.3.6]{CS}
where the orthogonality is stated for the functions $\CX_\iota$).
Since there is a unique $\iota$ in a given block supported on the regular
unipotent class, we get the orthogonality in (i).
In the case $\iota=\iota'$ in (i), the specialization $\bL=\bG$ in Proposition
\ref{RLG Gamma} is
$\scal{\Gamma^\bG_\iota}{\Gamma^\bG_{\iota'}}\GF=
|\frac{Z(\bG)}{Z^0(\bG)}|^2
\sum_{w\in W_\bG(\Lo)}\frac{|\ZLowF|}{|W_\bG(\Lo)|}$.
By \cite[Corollary 5.2]{DLM3}, where we use that $\tilde Q_\iota=1$
when $\iota$ has regular support, we have
$\sum_{w\in W_\bG(\Lo)}\frac{|\ZLowF|}
{|W_\bG(\Lo)|}=q^{-2c_\iota}|C_\bG(u)^{0F}|$.
Whence
$\scal{\Gamma^\bG_\iota}{\Gamma^\bG_{\iota'}}\GF=
|\frac{Z(\bG)}{Z^0(\bG)}|^2q^{-\rk\bG+\dim Z(\Lo)}|C^0_\bG(u)^F|$.
Using $|C^0_\bG(u)^F|=q^{\rkss\bG}|Z^0(\bG)^F|$, we get (i).

For (ii), we  apply Proposition \ref{Gamma=Y} in (i), using that
$D$ is an isometry and that
$\sigma_\zeta\overline{\sigma_\zeta}=q^{\rkss\Lo}$
by \cite[proposition 2.5]{DLM2}.
\end{proof}
A particular case of Proposition \ref{RLG Gamma} is
\begin{corollary}\label{BM1}
 If $(\bL,\iota)$ is a cuspidal pair, that is $\bL=\Lo$, then
$$\scal{R_\bL^\bG\Gamma^\bL_\iota}{R_\bL^\bG\Gamma^\bL_\iota}\GF=
|\frac{Z(\bL)}{Z^0(\bL)}|^2 |W_\bG(\bL)||Z(\bL)^{0F}|.$$
\end{corollary}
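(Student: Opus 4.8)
The plan is to derive Corollary \ref{BM1} as the special case $\bL=\Lo$ of Proposition \ref{RLG Gamma}. When $\bL$ is itself cuspidal, the cuspidal datum $(\Lo,\io)$ has $\Lo=\bL$, so the relative Weyl group $W_\bL(\Lo)=N_\bL(\bL)/\bL$ is trivial. The sum $\sum_{w\in W_\bL(\Lo)}$ therefore collapses to a single term, the one indexed by $w=1$.

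First I would specialize each factor in the formula of Proposition \ref{RLG Gamma} under the assumption $W_\bL(\Lo)=\{1\}$. With $w=1$ the only index, the denominator $|W_\bL(\Lo)|^2$ becomes $1$. The class $(wF)^{W_\bG(\Lo)}$ is the $W_\bG(\Lo)$-conjugacy class of $F$ (that is, of the identity coset), and its intersection with $W_\bL(\Lo)=\{1\}$ is just $\{F\}$, so $|(wF)^{W_\bG(\Lo)}\cap W_\bL(\Lo)|=1$. On the other hand the size of the full class $|(wF)^{W_\bG(\Lo)}|$ equals $|W_\bG(\Lo)|/|C_{W_\bG(\Lo)}(F)|$; here I would note that the centralizer $C_{W_\bG(\Lo)}(F)=W_\bG(\Lo)^F$, so the ratio $|W_\bG(\Lo)|/|(wF)^{W_\bG(\Lo)}|=|W_\bG(\Lo)^F|$. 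Since $\Lo=\bL$ is $F$-stable, $W_\bG(\Lo)=W_\bG(\bL)$ and its $F$-fixed points give exactly the factor $|W_\bG(\bL)|$ appearing in the statement.

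Assembling these, the right-hand side of Proposition \ref{RLG Gamma} becomes
$$|\tfrac{Z(\bL)}{Z^0(\bL)}|^2\,|Z^0(\Lo)^{F}|\,|W_\bG(\Lo)|\,\frac{1}{|(F)^{W_\bG(\Lo)}|},$$
and regrouping $|W_\bG(\Lo)|/|(F)^{W_\bG(\Lo)}|=|W_\bG(\bL)^F|$ yields $|\tfrac{Z(\bL)}{Z^0(\bL)}|^2\,|W_\bG(\bL)|\,|Z(\bL)^{0F}|$ once we identify $Z^0(\Lo)=Z^0(\bL)$ (using $\Lo=\bL$) and rewrite $|Z^0(\bL)^F|=|Z(\bL)^{0F}|$. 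This matches the claimed formula.

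I expect the only real subtlety to be the counting step for the conjugacy class of $wF$ inside the coset $W_\bG(\Lo)F$: one must interpret $(wF)^{W_\bG(\Lo)}$ as an $F$-twisted conjugacy class and correctly identify its centralizer with the $F$-fixed subgroup $W_\bG(\bL)^F$, so that $|W_\bG(\Lo)|/|(wF)^{W_\bG(\Lo)}|=|W_\bG(\bL)^F|=|W_\bG(\bL)|$ when $\bL$ (hence its normalizer quotient) is $F$-stable. Everything else is a direct substitution of $W_\bL(\Lo)=\{1\}$, with no geometric input beyond what Proposition \ref{RLG Gamma} already supplies.
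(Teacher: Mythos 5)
Your overall route is exactly the paper's: Corollary \ref{BM1} is presented there as a particular case of Proposition \ref{RLG Gamma} with no further argument, and your substitution $W_\bL(\Lo)=\{1\}$, the intersection count $|(F)^{W_\bG(\Lo)}\cap W_\bL(\Lo)F|=1$, and the orbit--stabilizer identity $|W_\bG(\Lo)|/|(F)^{W_\bG(\Lo)}|=|C_{W_\bG(\Lo)}(F)|=|W_\bG(\Lo)^F|$ are all correct. But note what this actually yields:
$$\scal{R_\bL^\bG\Gamma^\bL_\iota}{R_\bL^\bG\Gamma^\bL_\iota}\GF
=\Bigl|\tfrac{Z(\bL)}{Z^0(\bL)}\Bigr|^2\,|W_\bG(\bL)^F|\,|Z(\bL)^{0F}|.$$
The gap is your final step, which you yourself flag as ``the only real subtlety'': you pass from $|W_\bG(\bL)^F|$ to $|W_\bG(\bL)|$ on the grounds that $\bL$ ``(hence its normalizer quotient) is $F$-stable''. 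That is a non sequitur. $F$-stability of $\bL$ is precisely what makes $F$ act on $W_\bG(\bL)=N_\bG(\bL)/\bL$; it says nothing about that action being trivial, and $W_\bG(\bL)^F$ is the fixed-point subgroup of this action, not the whole group.

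As reasoned, the step is in fact false in the stated generality. Take $\bG$ quasi-split but non-split, say of type $\lexp 2A_2$, and $\bL=\Lo$ a quasi-split maximal torus with $\iota$ the trivial local system on its regular (= trivial) unipotent class: this is a cuspidal pair, $\Gamma^\bL_\iota$ is the regular character of $\LF$, and orthogonality of Deligne--Lusztig characters gives $\scal{R_\bL^\bG\Gamma^\bL_\iota}{R_\bL^\bG\Gamma^\bL_\iota}\GF=|W_\bG(\bL)^F|\,|\LF|$, with $|W_\bG(\bL)^F|=2$ while $|W_\bG(\bL)|=6$. So equating $|W_\bG(\bL)^F|$ with $|W_\bG(\bL)|$ requires the genuinely extra input that $F$ act trivially on the relative Weyl group of the cuspidal pair; this holds in the case motivating the paper ($\bG$ of type $\lexp 2E_6$, $\bL$ of type $A_2\times A_2$, where the $F$-action on $W_\bG(\bL)\cong W(G_2)$ is conjugation by the image of $w_0$, which is central in $W(G_2)$), but it does not follow from $F$-stability of $\bL$. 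Your computation, carried out honestly, actually surfaces the same imprecision in the printed statement of the corollary: what Proposition \ref{RLG Gamma} gives in general is the factor $|W_\bG(\bL)^F|$, and the formula with $|W_\bG(\bL)|$ is correct only under that triviality hypothesis, which should be stated and justified rather than absorbed silently.
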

We remark that this coincides with the value predicted by the Mackey formula
$$\scal{R_\bL^\bG\Gamma^\bL_\iota}{R_\bL^\bG\Gamma^\bL_\iota}\GF=
\sum_{x\in \LF\backslash \CS(\bL,\bL)/\LF}\scal
{\lexp *R^\bL_{\bL\cap\lexp x\bL}(\Gamma^\bL_\iota)}
{\lexp *R^{\lexp x\bL}_{\bL\cap\lexp x\bL}(\lexp x\Gamma^\bL_\iota)}
{(\bL\cap\lexp x\bL)^F} $$

Indeed, since the block $\CI$ which contains the local system $\zeta$ is reduced
to the unique cuspidal local system $(C,\zeta)$ where $C$ is the regular class
of $\bL$,
all terms in the Mackey formula where $\bL\cap\lexp x\bL\ne\bL$ vanish.
Thus the Mackey formula reduces to
$$\scal{R_\bL^\bG\Gamma^\bL_\iota}{R_\bL^\bG\Gamma^\bL_\iota}\GF=
\sum_{x\in W_\bG(\bL)} \scal{\Gamma^\bL_\iota} {\lexp x\Gamma^\bL_\iota} \LF$$
and any $x$ in $W_\bL(\bL)$ acts trivially on $H^1(F,Z(\bL))$ since, the map $h_\bL$
being surjective, any element of $H^1(F,Z(\bL))$ is represented
by an element of $H^1(F,Z(\bG))$; thus all the terms in the sum are equal,
and we get the same result as Corollary \ref{BM1} by applying Corollary \ref{BM1} in the case $\bG=\bL$.

Another method for computing
$\scal{R_\bL^\bG D\Gamma_i}{R_\bL^\bG D\Gamma_i}\GF$ would be to use Proposition
\ref{unipotent} and the values of the two-variable Green functions. 

We  give these  values in  the following  table in  the particular  case of
$\lexp  2E_6$ for the  $F$-stable standard Levi  subroup of type $A_2\times
A_2$, for $q\equiv -1\pmod 3$, so that $F$ acts trivially on
$Z(\bG)/Z^0(\bG)$. 
This table has been computed in \chevie\ using Proposition \ref{somme}.
The  method is to compute the  one-variable Green functions which appear in
the  right-hand side  sum by  the Lusztig-Shoji  algorithm; note  that even
though  the characteristic function of cuspidal character sheaves are known
only  up to a root of unity,  this ambiguity disappears when doing the sum,
since  such a scalar  appears multiplied by  its complex conjugate. However
the  Lusztig-Shoji  algorithm  depends  also  on  the  knowledge  that when
$\iota$,   with  support  the  class  of  the  unipotent  element  $u$,  is
parameterized   by  $(u,\chi)$  where   $\chi\in\Irr(A(u))$  then  for  the
unipotent  element $u_a\in\GF$ parameterized by  $a\in H^1(F,A(u))$ we have
$\CY_\iota(u_a)=\chi(a)$.  We assume that this hold.
This is known  when $\iota$ is  in the principal
block,  but not  for the  two blocks  with cuspidal datum 
supported on the Levi subgroup of type $A_2\times A_2$.

Note that the table shows that the values of $|v^\LF|Q^\bG_\bL(u,v)$
are not in general polynomials with integral coefficients
but may have denominators equal to $|A(v)|$.
\begin{center}\textsc{ Table 1.} Values of $|v^\LF|Q^\bG_\bL(u,v)$ for
$\bG=\lexp2E_{6}(q)$ simply connected and
$\bL=A _2(q^2)(q-1)^2$, for $q\equiv -1 \pmod 3$.
\end{center}
\begin{footnotesize}
$$
\begin{array}{c|cccccccc}
v\backslash u&E_6&\mbox{$E_6$}_{(\zeta_3)}&\mbox{$E_6$}_{(\zeta_3^2)}&E_6(a_1)&\mbox{$E_6(a_1)$}_{(\zeta_3)}&\mbox{$E_6(a_1)$}_{(\zeta_3^2)}&D_5&E_6(a_3)\\
\hline
111,111&0&0&0&0&0&0&0&0\\
21,21&0&0&0&0&0&0&1&4q+1\\
3,3&1&0&0&(4q+1)/3&\Phi_{2}/3&\Phi_{2}/3&2q\Phi_{2}/3&(7q^{2}+2q-2)q/3\\
\mbox{$3,3$}_{(\zeta_3)}&0&1&0&\Phi_{2}/3&(4q+1)/3&\Phi_{2}/3&2q\Phi_{2}/3&(q-2)q\Phi_{2}/3\\
\mbox{$3,3$}_{(\zeta_3^2)}&0&0&1&\Phi_{2}/3&\Phi_{2}/3&(4q+1)/3&2q\Phi_{2}/3&(q-2)q\Phi_{2}/3\\

\end{array}
$$

$$
\begin{array}{c|ccccc}
v\backslash u&\mbox{$E_6(a_3)$}_{(-\zeta_3^2)}&\mbox{$E_6(a_3)$}_{(\zeta_3)}&\mbox{$E_6(a_3)$}_{(-1)}&\mbox{$E_6(a_3)$}_{(\zeta_3^2)}&\mbox{$E_6(a_3)$}_{(-\zeta_3)}\\
\hline
111,111&0&0&0&0&0\\
21,21&2q+1&4q+1&2q+1&4q+1&2q+1\\
3,3&q^{2}\Phi_{2}&(q-2)q\Phi_{2}/3&(3q+2)q^{2}&(q-2)q\Phi_{2}/3&q^{2}\Phi_{2}\\
\mbox{$3,3$}_{(\zeta_3)}&(3q+2)q^{2}&(q-2)q\Phi_{2}/3&q^{2}\Phi_{2}&(7q^{2}+2q-2)q/3&q^{2}\Phi_{2}\\
\mbox{$3,3$}_{(\zeta_3^2)}&q^{2}\Phi_{2}&(7q^{2}+2q-2)q/3&q^{2}\Phi_{2}&(q-2)q\Phi_{2}/3&(3q+2)q^{2}\\

\end{array}
$$

$$
\begin{array}{c|cccc}
v\backslash u&A_5&\mbox{$A_5$}_{(\zeta_3)}&\mbox{$A_5$}_{(\zeta_3^2)}&D_5(a_1)\\
\hline
111,111&0&0&0&0\\
21,21&(-2q-1)\Phi_{2}&(-2q-1)\Phi_{2}&(-2q-1)\Phi_{2}&3q+1\\
3,3&q\Phi_{2}\Phi_{3}/3&(-5q^{2}-2q+1)q\Phi_{2}/3&(-5q^{2}-2q+1)q\Phi_{2}/3&q\Phi_{1}\Phi_{2}/3\\
\mbox{$3,3$}_{(\zeta_3)}&(-5q^{2}-2q+1)q\Phi_{2}/3&q\Phi_{2}\Phi_{3}/3&(-5q^{2}-2q+1)q\Phi_{2}/3&q\Phi_{1}\Phi_{2}/3\\
\mbox{$3,3$}_{(\zeta_3^2)}&(-5q^{2}-2q+1)q\Phi_{2}/3&(-5q^{2}-2q+1)q\Phi_{2}/3&q\Phi_{2}\Phi_{3}/3&q\Phi_{1}\Phi_{2}/3\\

\end{array}
$$

$$
\begin{array}{c|cccc}
v\backslash u&A_4{+}A_1&D_4&A_4&\mbox{$D_4(a_1)$}_{(111)}\\
\hline
111,111&0&1&0&4q+1\\
21,21&\Phi_{2}\Phi_{3}&3q\Phi_{2}\Phi_{6}&(3q^{3}+q^{2}+q+1)\Phi_{2}&(8q^{3}+2q^{2}+4q-2)q\Phi_{2}\\
3,3&(2q+1)q^{3}\Phi_{2}/3&q\Phi_{1}\Phi_{2}^{2}\Phi_{6}/3&q^{4}\Phi_{2}^{2}&(4q+1)q^{4}\Phi_{2}^{2}/3\\
\mbox{$3,3$}_{(\zeta_3)}&(2q+1)q^{3}\Phi_{2}/3&q\Phi_{1}\Phi_{2}^{2}\Phi_{6}/3&q^{4}\Phi_{2}^{2}&(4q+1)q^{4}\Phi_{2}^{2}/3\\
\mbox{$3,3$}_{(\zeta_3^2)}&(2q+1)q^{3}\Phi_{2}/3&q\Phi_{1}\Phi_{2}^{2}\Phi_{6}/3&q^{4}\Phi_{2}^{2}&(4q+1)q^{4}\Phi_{2}^{2}/3\\

\end{array}
$$

$$
\begin{array}{c|cccc}
v\backslash u&\mbox{$D_4(a_1)$}_{(21)}&D_4(a_1)&A_3{+}A_1&A_3\\
\hline
111,111&2q+1&\Phi_{2}&(-2q-1)\Phi_{2}&(3q^{3}+2q+1)\Phi_{2}\\
21,21&(8q^{3}+6q^{2}+2q+2)q^{2}&(2q+1)q\Phi_{2}\Phi_{6}&(-4q^{3}-q^{2}-2q+1)q\Phi_{2}^{2}&(3q^{3}-q^{2}+2q-1)q\Phi_{2}^{2}\Phi_{4}\\
3,3&(2q+1)q^{4}\Phi_{1}\Phi_{2}/3&q^{4}\Phi_{2}\Phi_{6}/3&(-2q-1)q^{5}\Phi_{2}^{2}/3&0\\
\mbox{$3,3$}_{(\zeta_3)}&(2q+1)q^{4}\Phi_{1}\Phi_{2}/3&q^{4}\Phi_{2}\Phi_{6}/3&(-2q-1)q^{5}\Phi_{2}^{2}/3&0\\
\mbox{$3,3$}_{(\zeta_3^2)}&(2q+1)q^{4}\Phi_{1}\Phi_{2}/3&q^{4}\Phi_{2}\Phi_{6}/3&(-2q-1)q^{5}\Phi_{2}^{2}/3&0\\

\end{array}
$$

$$
\begin{array}{c|cccc}
v\backslash u&2A_2{+}A_1&\mbox{$2A_2{+}A_1$}_{(\zeta_3)}&\mbox{$2A_2{+}A_1$}_{(\zeta_3^2)}&2A_2\\
\hline
111,111&\Phi_{2}\Phi_{3}&\Phi_{2}\Phi_{3}&\Phi_{2}\Phi_{3}&\Phi_{2}^{2}\Phi_{3}\Phi_{6}\\
21,21&(2q^{3}+2q^{2}+4q+1)q^{3}\Phi_{2}&(2q^{3}+2q^{2}+4q+1)q^{3}\Phi_{2}&(2q^{3}+2q^{2}+4q+1)q^{3}\Phi_{2}&3q^{4}\Phi_{2}^{2}\Phi_{3}\Phi_{6}\\
3,3&q^{6}\Phi_{2}\Phi_{3}&0&0&q^{6}\Phi_{2}^{2}\Phi_{3}\Phi_{6}\\
\mbox{$3,3$}_{(\zeta_3)}&0&q^{6}\Phi_{2}\Phi_{3}&0&0\\
\mbox{$3,3$}_{(\zeta_3^2)}&0&0&q^{6}\Phi_{2}\Phi_{3}&0\\

\end{array}
$$

$$
\begin{array}{c|cccc}
v\backslash u&\mbox{$2A_2$}_{(\zeta_3)}&\mbox{$2A_2$}_{(\zeta_3^2)}&A_2{+}2A_1&A_2{+}A_1\\
\hline
111,111&\Phi_{2}^{2}\Phi_{3}\Phi_{6}&\Phi_{2}^{2}\Phi_{3}\Phi_{6}&(2q^{4}+q^{3}+q^{2}+q+1)\Phi_{2}&\Phi_{2}^{2}\Phi_{3}\Phi_{6}\\
21,21&3q^{4}\Phi_{2}^{2}\Phi_{3}\Phi_{6}&3q^{4}\Phi_{2}^{2}\Phi_{3}\Phi_{6}&(q^{3}+2q+1)q^{5}\Phi_{2}&(3q^{2}+2q+1)q^{4}\Phi_{2}^{2}\Phi_{6}\\
3,3&0&0&0&0\\
\mbox{$3,3$}_{(\zeta_3)}&q^{6}\Phi_{2}^{2}\Phi_{3}\Phi_{6}&0&0&0\\
\mbox{$3,3$}_{(\zeta_3^2)}&0&q^{6}\Phi_{2}^{2}\Phi_{3}\Phi_{6}&0&0\\

\end{array}
$$

$$
\begin{array}{c|cc}
v\backslash u&\mbox{$A_2$}_{(11)}&A_2\\
\hline
111,111&(3q^{5}+q^{2}+q+1)\Phi_{2}^{2}\Phi_{6}&5q^{9}+3q^{8}+4q^{7}+4q^{6}+5q^{5}+4q^{4}+2q^{3}+2q^{2}+2q+1\\
21,21&(4q^{2}+q+1)q^{4}\Phi_{2}^{3}\Phi_{6}^{2}&(2q+1)q^{4}\Phi_{1}\Phi_{2}\Phi_{3}\Phi_{4}\Phi_{6}\\
3,3&0&0\\
\mbox{$3,3$}_{(\zeta_3)}&0&0\\
\mbox{$3,3$}_{(\zeta_3^2)}&0&0\\

\end{array}
$$

$$
\begin{array}{c|cc}
v\backslash u&3A_1&2A_1\\
\hline
111,111&(-3q^{9}-3q^{8}-3q^{6}-3q^{5}-2q^{4}-q^{3}-q^{2}-q-1)\Phi_{2}&(2q^{8}+q^{6}+q^{5}+q^{4}+1)\Phi_{2}^{2}\Phi_{3}\Phi_{6}\\
21,21&(-2q^{2}-1)q^{7}\Phi_{2}^{3}\Phi_{6}&q^{9}\Phi_{2}^{3}\Phi_{3}\Phi_{4}\Phi_{6}\\
3,3&0&0\\
\mbox{$3,3$}_{(\zeta_3)}&0&0\\
\mbox{$3,3$}_{(\zeta_3^2)}&0&0\\

\end{array}
$$

$$
\begin{array}{c|cc}
v\backslash u&A_1&1\\
\hline
111,111&(2q^{10}+q^{9}+q^{8}+q^{7}+2q^{6}+q^{5}+q^{4}+q^{2}+q+1)\Phi_{2}^{3}\Phi_{6}\Phi_{10}&\Phi_{2}^{4}\Phi_{3}\Phi_{4}\Phi_{6}^{2}\Phi_{8}\Phi_{10}\Phi_{12}\Phi_{18}\\
21,21&0&0\\
3,3&0&0\\
\mbox{$3,3$}_{(\zeta_3)}&0&0\\
\mbox{$3,3$}_{(\zeta_3^2)}&0&0\\
\end{array}
$$
\end{footnotesize}

\end{document}